\newtheorem{theorem}{Theorem}
\newtheorem{remark}[theorem]{Remark}
\newtheorem{lemma}[theorem]{Lemma}
\newcommand{\red}{\color{black}}
\newcommand{\N}{\mathbb{N}}
\newcommand{\Y}{\mathbb{Y}}
\newcommand{\A}{\mathcal{A}}
\newcommand{\B}{\mathcal{B}}
\def\BibTeX{{\rm B\kern-.05em{\sc i\kern-.025em b}\kern-.08em
    T\kern-.1667em\lower.7ex\hbox{E}\kern-.125emX}}
\begin{document}
\title{Boundary Control and Estimation for Under-Balanced Drilling with Uncertain Reservoir Parameters}
\author{Timm Strecker, Ulf Jakob F. Aarsnes
\thanks{This work was supported by the Australian Research Council (LP160100666)
and the Research Council of Norway through the research center DigiWells (309589) at NORCE. \red}
\thanks{T. Strecker is with the 
Department of Electrical and Electronic Engineering, The University of Melbourne, Australia  (timm.strecker@unimelb.edu.au). }
\thanks{U. J. F. Aarsnes is with {NORCE Norwegian Research Centre AS, Oslo, Norway (ulaa@norceresearch.no)} .}}

\maketitle

\begin{abstract}
In under-balanced drilling, the bottom-hole pressure is kept below pore pressure, causing pressure dependent influx of reservoir gas into the wellbore that makes the system unstable at low drawdowns. In this paper we propose a feedback controller which stabilizes the system around an arbitrary pressure setpoint, using only topside measurement, and assuming unknown reservoir parameters. A particular challenge with this problem is the distributed and highly nonlinear nature of the system dynamics. As the control model we use the ``reduced Drift Flux Model'' which models gas-liquid flow as a nonlinear transport equation with a non-local integral source term. The observer estimates the distributed gas concentration, downhole pressure and reservoir parameters by solving the system dynamics backwards relative to how the gas rises in the well. The control inputs are then constructed by  designing target states over the next sampling period and again solving reversed dynamics to obtain the required topside pressures. The resulting controller is implemented with a 2 minute zero-order hold  to accommodate the actuation limitation situation on an actual drilling rig. The results are illustrated in simulations with a industry standard Drift Flux formulation as the plant model.
\end{abstract}

\begin{IEEEkeywords}
Under-balanced drilling, partial differential equations, boundary control, observer, parameter estimation, distributed parameter systems, adaptive control
\end{IEEEkeywords}

\section{Introduction}
\label{sec:introduction}

When drilling a well for the purpose of producing hydrocarbons, a slim borehole is created into a permeable pressurized formation using a drilling bit attached to a drill string. Drilling liquid is injecting into the top of the drill string and flows out through the drill bit and up the annulus around the drill string carrying formation cuttings with it out of the borehole, see Fig. \ref{fig:well schematic}.
Controlling the pressure of the drilling fluid near the bottom of the well is of key importance to the {\red success} of the drilling operation: {\red Too high pressure} means that expensive drilling liquid is lost to the formation which results in reduced return flow and insufficient hole cleaning. Too low pressure can result in pressurized formation fluids entering the well, displacing the high density drilling liquid, and creating an unstable feedback loop which can result in blow-out and collapse of the well if not controlled \cite{Godhavn2010,Godhavn2011}.

To control the pressure in the well more effectively, many wells are today drilled with a sealed anulus and a manipulated back-pressure choke, which allows for the control of the pressure at the top of the well by the driller. In particular, these tools are used to perform \textit{Under-Balanced Drilling} (UBD) where the well pressure is intentionally kept below the formation pore pressure such that formation fluids flow into the well while drilling. Underbalanced drilling have many benefits, such as improved rate of penetration, better cuttings transport, higher well productivity and less risk of loss of drilling liquid \cite{Bennion1998a}. However, these benefits come at the cost of the {\red significant} increase in the difficulty of controlling the well \cite{Graham2004}.

\begin{figure}[]
	\centering
	\includegraphics[width=\linewidth]{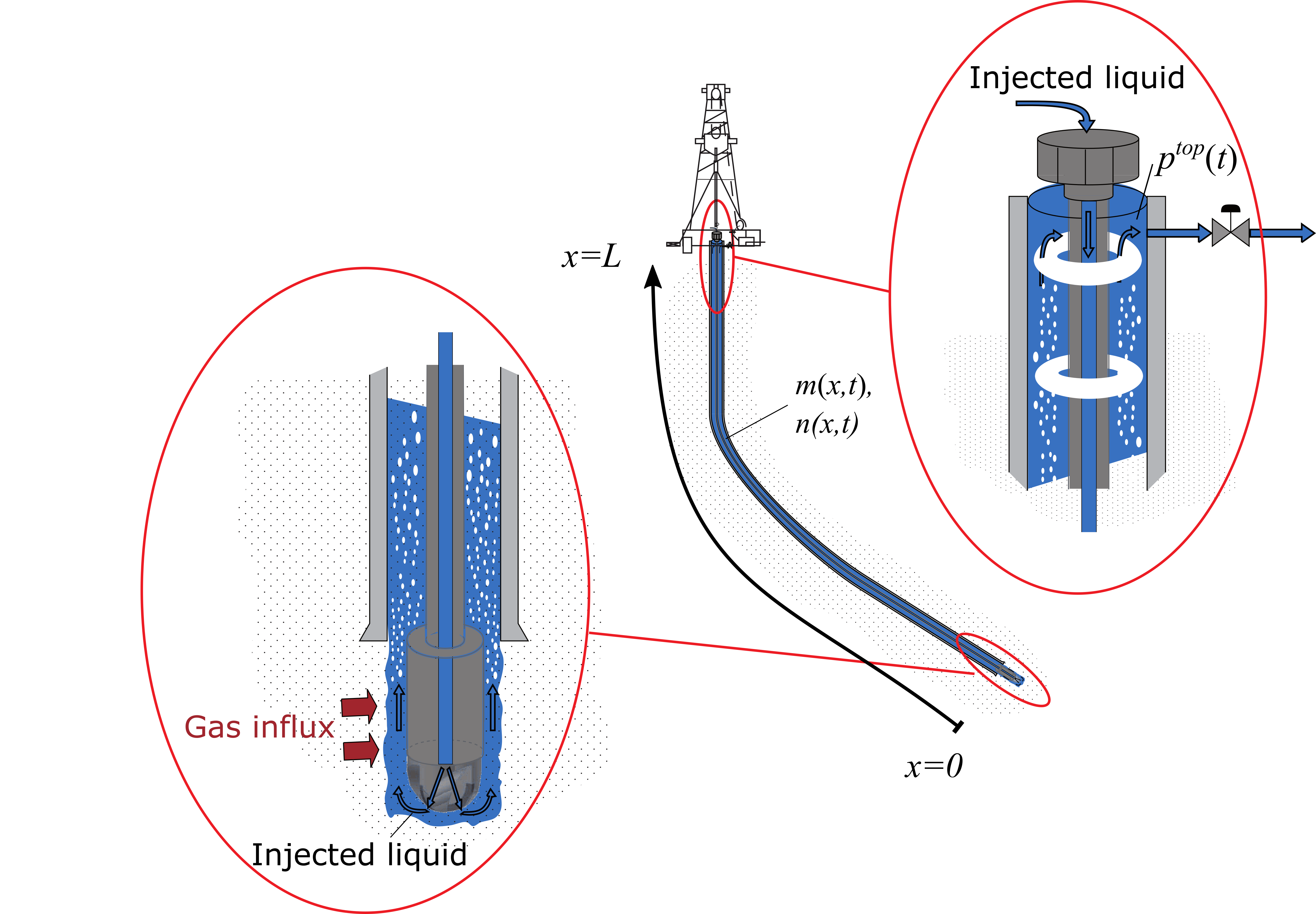}
	\caption{Schematic of an underbalanced well being drilled.}
	\label{fig:well schematic}
\end{figure}

In the context of automated pressure and flow control, the dynamics of the two-phase flow encountered in UBD is significantly more complicated than the single-phase flow of conventional drilling: In single-phase flow any operating point is inherently stable, transients are short and predictable and, barring certain well control incidents, operating conditions are reasonably homogeneous. By contrast, in two-phase underbalanced operations, the distributed gas--liquid flow and the reservoir--well interaction result in classical non-linear behavior such as multiple equilibria, limit cycles and bifurcations as described by \cite{Aarsnes2014,Mykytiw2003a,Mykytiw2004}.

A particular challenge with UBD is the interaction between the well and the reservoir, wherein a low well pressure induces reservoir influx of low density fluids which displaces the high density drilling liquid reducing the hydrostatic pressure causing yet more influx. This positive feedback loop makes the well unstable at a wide range of bottomhole pressures below the balance point, see Fig. \ref{fig:pressures}. To have stable operation in open loop, sufficient influx is required such that the frictional pressure loss caused by the influx becomes greater than the reduction in the hydrostatic pressure. Consequently, UBD is currently limited to formations with very high collapse pressure-margins. As such, there is a significant value proposal in using automated pressure control to stabilize the open loop unstable region below the balance point \cite{AarsnesThesis, aarsnes2016methodology}.

\begin{figure}
	\includegraphics[width=\linewidth]{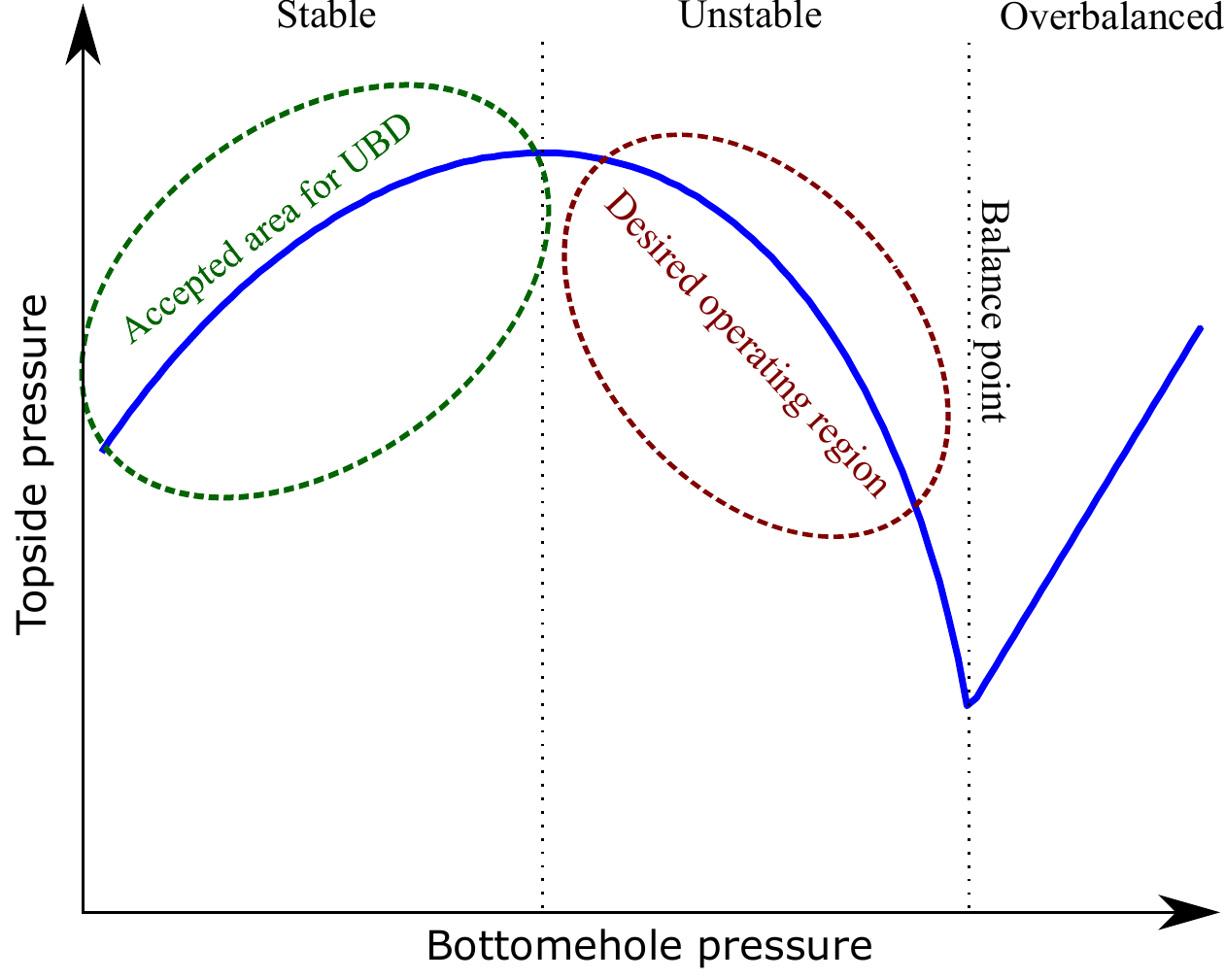}
	\caption{Conceptual plot showing the relation between topside and bottomhole pressures at equilibrium. For a given topside pressure there are typically three equilibrium points: one overbalanced, one open loop \textit{unstable} underbalanced, and one open loop stable underbalanced.}
	\label{fig:pressures}
\end{figure}

However, the non-linear and distributed nature of the system makes controller design challenging. The necessity to control the system from a stable to an open loop unstable equilibrium with vastly different dynamics (from one-phase overbalanced to two-phase underbalanced flow) precludes the application of standard ``off-the shelf'' linearized controller-designs. As such, this problem motivates the distributed non-linear controller design approach pursued in the present paper where we focus on the stabilization problem which has not been explicitly adressed before. We refer to the following papers on MPC and linear multivariable control of the extended drilling process for additional context: \cite{Pedersen2013,Pedersen2018,Pedersen2017,Pedersen2015}. {\red PDE backstepping has recently become a popular method for the feedback control of linear PDEs \cite{krstic2008backstepping,vazquez2011backstepping,aamo2013disturbance}, but the non-linearity of the model disqualifies known PDE backstepping results in this case.}


The proposed sampled-time output feedback controller consists of an observer and a feedback controller. The observer estimates the current distributed gas concentration in the well based on the history of topside measurements only, i.e., without  requiring any downhole measurements of pressure or other variables. The  feedback controller maps the estimate of the current gas concentration and the reference for the bottom hole pressure, into the topside pressures that are required to achieve the desired reference. The proposed control law is model based, and takes into account the distributed non-linear dynamics, including  non-local dependencies of the terms modelling  gas expansion which depend on the weight of the whole fluid column. In simulations with a more detailed drift-flux model, the proposed control law stabilizes the system in the desired operating region below the pore pressure, see Fig \ref{fig:pressures}. Specifically, we choose a operating point just below the balance point, which is considered the most difficult region to operate in. Moreover, the proposed  estimation scheme allows  the online identification of uncertain reservoir parameters determining the gas influx, again based solely on topside measurements. This allows the adaptation of the controller while the system is operated in closed-loop control.

The proposed observer is related to the approach in \cite{li2008observability} where, starting with the history of topside measurements, the model dynamics are first solved backwards to reconstruct the past gas and pressure distribution in the well, which is then used to estimate the current state in a second step. The feedback control part builds on ideas presented in \cite{strecker2019quasilinearfirstorder,strecker2021quasilinear2x2,li2003exact,gugat2011flow}, where one starts with the reference signal and again solves the distributed, non-linear model dynamics backwards to determine the inputs that are required to achieve  reference tracking. However, these references consider different classes of systems without the  non-local dependencies. In that sense, the theoretical contributions of this paper can be seen as  an extension of this approach to a class of quasilinear hyperbolic partial differential equations (PDEs) with non-local source terms.

%

\section{Modelling}
In this section we describe the Drift-Flux Model (DFM) that we will use to simulate the plant, and then the reduced-DFM that we use for the model based control design.

\subsection{Drift-flux model} \label{sec:drift flux}
As the plant model, to represent the two-phase gas--liquid flow and pressure dynamics, we use the drift-flux model presented in \cite{Aarsnes2014a} (see \cite{Evje2002} for numerical details). The drift-flux model is an established way to represent two-phase flow in drilling in the litterature \cite{Aarsnes2016,Udegbunam2015}. Define the mass variables
\begin{align}
m&= \alpha_L \rho_L, & n&= \alpha_G \rho_G,  \label{m n definition}
\end{align}
where for $k=L,G$ denoting liquid or gas, $\rho_k$ is the density and $\alpha_k$ is the volume fraction of the respective phase. Let $p$ be the pressure and $v_k$ be the velocity of each phase. All variables above depend on  time $t\geq 0$ and spatial position $x\in[0,L]$ along the well (in curvilinear coordinates, where $x=0$ corresponds to the well bottom and $x=L$ is at the topside choke), but the arguments $(x,t)$ behind the variables are often omitted for readability, see schematic in Fig. \ref{fig:well schematic}.
The distributed mass balances for the two phases and the momentum balance for the mixture are given by
\begin{align}
\frac{\partial m}{\partial t} &= -\frac{\partial (m v_L)}{\partial x}, \label{mass balance L} \\
\frac{\partial n}{\partial t} &= -\frac{\partial (n v_G)}{\partial x}, \label{mass balance G}\\
\frac{\partial (m v_L + n v_G) }{\partial t} &= -\frac{\partial (m v_L^2 + n v_G^2)}{\partial x} -\frac{\partial p}{\partial x} - F - G. \label{momentum balance} 
\end{align}
In (\ref{momentum balance}), the gravity term is $G=(m+n)g\cos(\phi)$ where $g$ is the gravitational acceleration and $\phi$ is the inclination from vertical. The friction term is $F=\frac{f \rho v_m |v_m|}{D}$, with friction factor $f$, mixture density $\rho=m+n$, mixture velocity $v_m = \alpha_L v_L + \alpha_G v_G$ and  hydraulic diameter $D$. 

The model is completed by the following algebraic relations. The volume fractions add up to one, i.e.,
\begin{align}
\alpha_L + \alpha_G & = 1. \label{alpha sum = 1}
\end{align}
The densities depend on the pressure as given in
\begin{align}
\rho_L &= \rho_{L,0} + \frac{p}{c_L^2}, & \rho_G &=\frac{p}{c_G^2}, \label{densities}
\end{align}
{\red where $\rho_{L,0}$ denotes the liquid density in vacuum, and $c_L,c_G$ the speed of sound in liquid and gas, respectively.}
The velocities satisfy the slip law
\begin{equation}
v_G = C_0 v_L + v_{\infty},  \label{slip law}
\end{equation}
{\red where $C_0,v_\infty$ are empirical slip parameters discussed in  \cite{gavrilyuk1996lagrangian}.}
In this model, the pressure is well-defined and can be obtained by solving (\ref{m n definition}) with (\ref{alpha sum = 1})-(\ref{densities}) for $p$.
{\red For simplicity and readability, we omitted any spatial dependence of the parameters ($f$, $c_{L}$, etc.), but all  parameters can be made dependent on $x$ (e.g., due to temperature variations along the well) without any change in the proposed approach. }

\subsection{Boundary conditions} \label{sec:boundary conditions}
In this paper we assume that the pressure applied at the topside choke is a manipulated variable determined by the driller or a control law, i.e.,
\begin{equation}
p(L,t) = p^{\text{top}}(t),  \label{p_choke}
\end{equation}
with $p^{\text{top}}$ as the manipulated variable. At the well bottom, the gas inflow depends on the difference between bottomhole pressure and the reservoir pore pressure. In this paper, we use
\begin{equation}
A n(0,t)v_G(0,t) = k_G \,\max(0,\, p_{\text{res}} - p(0,t)),  \label{inflow G}
\end{equation}
where $A$ is the cross section of the annulus and $k_G$ is the gas production index, although it is straightforward to generalize the  methods presented in this paper to other nonlinear relationships. The amount of liquid injected through the bit at the well bottom,~$W^{L,inj}$,~is determined by the rig pump
\begin{equation}
A m(0,t)v_L(0,t) = W^{L,inj}(t).  \label{inflow L}
\end{equation}

\subsection{Simplified model for control design} \label{sec: simplified}
In order to make the model more amenable for model-based control design, a simplification of the drift-flux model has been proposed in \cite{aarsnes2016simplified}. Observing that the pressure dynamics in the well are magnitudes faster than the  transport of mass, a quasi-equilibrium assumption is imposed on the momentum balance (\ref{momentum balance}), so that the number of distributed equations can be reduced to just one for the continuity of the gas volume fraction. 

Using this approach, the gas volume fraction can be approximated as
\begin{equation}
\frac{\partial \bar{\alpha}_G}{\partial t} + \bar{v}_G \frac{\partial \bar{\alpha}_G}{\partial x} = \bar{E}_G,  \label{alpha dynamics simplified}
\end{equation}
where the term
\begin{equation}
\bar{E}_G = -\frac{\bar{\alpha}_G(1-C_0 \bar{\alpha}_G)\bar{v}_G}{ \bar{p}} \frac{\partial \bar{p}}{\partial x} \label{E bar}
\end{equation}
accounts for gas expansion as the pressure decreases higher up in the well. In this simplified model, the gradient of the velocity $\bar{v}_G$ and the  pressure $\bar{p}$ are 
\begin{align}
  \frac{\partial \bar{p}}{\partial x}&= -(\bar{G}+\bar{F}),  &
  \frac{\partial \bar{v}_G}{\partial x}&= - \frac{C_0 \bar{\alpha}_G \bar{v}_G}{\bar{p}}  \frac{\partial \bar{p}}{\partial x},
\end{align}
where $\bar{F}=\frac{f\, \bar{\rho} \,\bar{v}_m\, |\bar{v}_m|}{D},$ $\bar{G}=\bar{\rho}g\cos(\phi)$, $\bar{\rho}=\bar{\alpha}_G\bar{\rho}_G+(1-\bar{\alpha}_G)\bar{\rho}_L$, and $\bar{\rho}_G$ and $\bar{\rho}_L$ are as in (\ref{densities}) but with $\bar{p}$ instead of $p$. Using this, the pressure and velocity profiles  can be obtained by starting from either the topside or the bottomhole pressure and velocity, respectively, via
\begin{align}
\bar{p}(x,t) &= \bar{p}(L,t) + \int_x^L \bar{G}(\xi,t) + \bar{F}(\xi,t) d\xi,   \label{pbar start L}\\ 
\bar{v}_G(x,t)& = \bar{v}_G(L,t) - \int_x^L  \frac{\partial \bar{v}_G(\xi,t)}{\partial \xi} d\xi,  \label{vbar start L}
\intertext{or}
\bar{p}(x,t) &=  \bar{p}(0,t) - \int_0^x \bar{G}(\xi,t) + \bar{F}(\xi,t) d\xi, \label{pbar start 0}\\
\bar{v}_G(x,t)& = \bar{v}_G(0,t) +  \int_0^x  \frac{\partial \bar{v}_G(\xi,t)}{\partial \xi} d\xi.  \label{vbar start 0}
\end{align}
That is, at each location $x\in[0,L]$ along the well, the terms $\bar{v}_G$ and $\bar{E}_G$ which determine the dynamics as given in  (\ref{alpha dynamics simplified}), can be expressed as a function of either the state in the fluid column  \emph{below} $x$, i.e., over the interval $[0,x]$,  or via the state \emph{above} $x$, i.e., over the interval $[x,L]$.

An implicit expression for the inflow boundary condition at $x=0$ can be obtained be rewriting (\ref{inflow G})-(\ref{inflow L}) as
\begin{align}
&\bar{\alpha}_G(0,t) \bar{v}_G(0,t) = \frac{k_G \,\max(0,\, p_{\text{res}} {-} \bar{p}(0,t))}{A\,\bar{\rho}_G(0,t)}, \label{inflow simplified 1}\\
&\left(1-\bar{\alpha}_G(0,t)\right)\,\frac{\bar{v}_G(0,t)-v_{\infty}}{C_0} = \frac{W^{L,inj}(t)}{A\,\bar{\rho}_L(0,t)}, \label{inflow simplified 2}
\end{align}
which can be solved for $\bar{\alpha}_G(0,t)$ and $\bar{v}_G(0,t)$. Let $\bar{\alpha}_G^{\text{inflow}}(\bar{p}(0,t))=\bar{\alpha}_G(0,t)$ be defined implicitly by the solution of (\ref{inflow simplified 1})-(\ref{inflow simplified 2}).

\section{Control design}
We present an output feedback control law consisting of an observer that estimates the distributed gas concentration along the well from  measurements at the topside boundary $x=L$ only, and a feedback control law  that computes the topside pressure so that in closed-loop the bottom pressure at $x=0$ convergences to the reference value $p_{\text{ref}}$.
The control law is sampled with sampling period $\theta$, i.e., at each time step $t_k=k\,\theta$, $k\in\N$, the control input is computed for the interval $[t_k,t_{k+1}]$. 

\begin{figure}[htbp!]\centering
\includegraphics[width=.8\columnwidth]{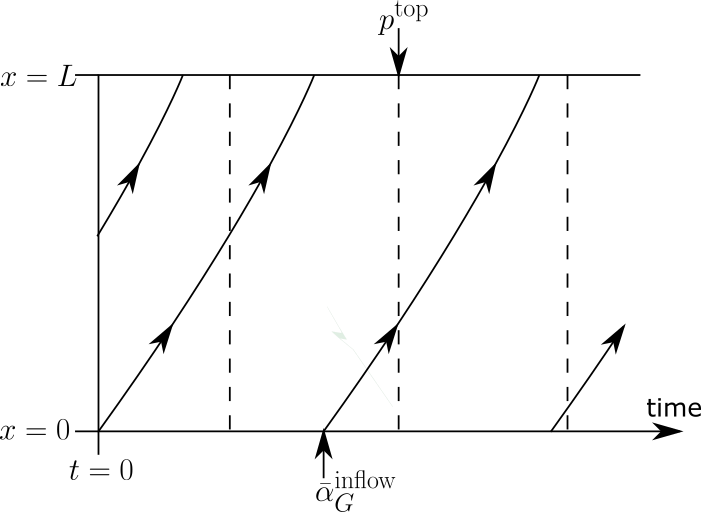}
\caption{Characteristic lines of system (\ref{alpha dynamics simplified}) representing gas propagating from the well bottom at $x=0$ to the top at $x=L$. The convex curvature of the characteristic lines indicates acceleration due to expansion. The dashed lines represent the integration paths for $\bar{p}$ and $\bar{v}_G$ as given in (\ref{pbar start L})-(\ref{vbar start 0}).  }
\label{fig:characteristic lines}
\end{figure}

The control law is based on the simplified model from Section \ref{sec: simplified}. It builds on ideas from \cite{strecker2017output,strecker2019quasilinearfirstorder,strecker2021quasilinear2x2} and is also related to \cite{li2003exact,li2008observability,gugat2011flow,li2016nodal}.  In particular, it exploits the fact that the gas propagates through the well with finite speed $\bar{v}_G$ or, mathematically speaking, along the characteristic lines of the hyperbolic PDE (\ref{alpha dynamics simplified}) (see, e.g., \cite[Chapter 2]{bressan2000hyperbolic}). The characteristic lines of system (\ref{alpha dynamics simplified}) are sketched in Figure \ref{fig:characteristic lines}.

\subsection{State estimation} \label{sec:observer}
\begin{figure}[htbp!]
\includegraphics[width=.85\columnwidth]{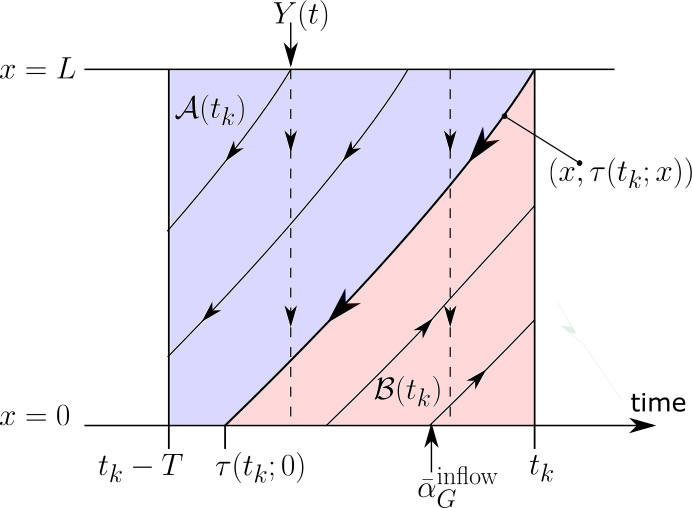}
\caption{Steps of evaluating the state estimation scheme at time $t_k$: (1) Solve the dynamics (\ref{alpha dynamics x-direction})-(\ref{v x-direction})  \emph{against} the direction of gas propagation over the domain $\A(t_k)$ (shaded in blue); (2) solve the dynamics  (\ref{alpha dynamics step2})-(\ref{v step2})  \emph{forward} in time over the domain $\B(t_k)$ (shaded in red). The thicker  line represents the characteristic line $(x,\tau(t_k;x)), x\in [0,L]$.}
\label{fig:observer steps}
\end{figure}

Due to the delay corresponding to the time the gas requires to travel from the well bottom to the top, it is impossible to immediately estimate the gas concentration along the well from topside measurements. Instead, the gas outflow at the top corresponds to gas that entered at the well bottom a certain amount of time in the past. 

Therefore, evaluating the proposed observer at  each time step $t_k$ consists of two steps, which are sketched in Figure \ref{fig:observer steps}. First, the \emph{past} gas concentration in the well is estimated by starting with the history of topside gas concentration measurements and then solving the gas dynamics (\ref{alpha dynamics simplified}) \emph{backwards} relative to how the gas propagates through the well, i.e., backwards in time and downwards in the well. Here, it is possible to reconstruct the past gas volume fraction up to the time of the characteristic line along which the latest measurement evolved (marked by the thicker line in Figure \ref{fig:observer steps}). Secondly, a prediction step is used to map the estimate of the past state on that characteristic line to the current state $\bar{\alpha}_G(\cdot,t_k)$.

Define the characteristic line corresponding to the measurement at time $t$ as
\begin{equation}
\tau(t;x) = t - \int_x^L \frac{1}{\bar{v}_G(\xi,\tau(t;\xi) )}d\xi. \label{tau}
\end{equation}
Define the measurement at time $t$ as
\begin{equation}
Y(t) =  \left(\begin{matrix}{\alpha}_G(L,t)\\{p}(L,t)\\{v}_G(L,t) \end{matrix}\right),  \label{Y(t)}
\end{equation}
and the measurement history with horizon $T>0$ as
\begin{equation}
\Y^T(t) = \left\{Y(s):\, s\in[t-T,t]\right\}.
\end{equation}  
In practice, a multi-phase flow meter can be used to measure both $\bar{\alpha}_G(L,t)$ and the topside flow rate, from which $\bar{v}_G(L,t)$ can be computed by use of (\ref{slip law}).

\subsubsection{State estimation: step 1}

With the boundary values at $x=L$ known for a sufficiently long time into the past, it is possible to estimate the past state inside the well by solving the dynamics in the negative $x$-direction. In particular, we need to assume that $T\geq t_k-\tau(t_k;0)$. By solving (\ref{alpha dynamics simplified}) for $\frac{\partial \bar{\alpha}_G}{\partial x}$ and using (\ref{pbar start L})-(\ref{vbar start L}) to determine the pressure and velocity profiles, respectively,  we obtain the following system:
\begin{align}
\frac{\partial \bar{\alpha}_G{\red (x,t)}}{\partial x} &= \frac{1}{\bar{v}_G(x,t)}\left( \bar{E}_G{\red (x,t)} -\frac{\partial \bar{\alpha}_G{\red (x,t)}}{\partial t}\right), \label{alpha dynamics x-direction} \\
\bar{\alpha}_G(L,t) &= \alpha_G(L,t), \label{BC x-direction}\\
\bar{p}(x,t) &= p(L,t) + \int_x^L \bar{G}(\xi,t) + \bar{F}(\xi,t) d\xi,   \label{p x-direction}\\ 
\bar{v}_G(x,t)& = v_G(L,t) - \int_x^L  \frac{\partial \bar{v}_G(\xi,t)}{\partial \xi} d\xi.  \label{v x-direction}
\end{align}
By use of  techniques similar to those in the proof of \cite[Theorem 5]{strecker2021quasilinear2x2} and \cite[Theorem 3.8]{bressan2000hyperbolic}, one can show that the system (\ref{alpha dynamics x-direction})-(\ref{v x-direction}) has a  solution on the determinate set
\begin{equation}
\A(t_k) = \left\{(x,t):\, x\in[0,L],\,t\in [t_k-T,\tau(t_k;x)] \right\}.  \label{A cal}
\end{equation}
Importantly, the solution on  $\A(t_k)$ contains the state on the characteristic line $(x,\tau(t_k;x))$, $x\in[0,L]$.  See also \cite[page 47]{bressan2000hyperbolic} for a more general discussion of determinate sets, and \cite[Remark 4.1]{li2008observability}  for a discussion of the minimum observation horizon $T$. In particular, the condition $T\geq t_k-\tau(t_k;0)$ ensures that the whole characteristic line $(x,\tau(t_k;x))$, for all $x\in[0,L]$, is contained in $\A(t_k)$. In other words, it ensures that the blue domain in Figure \ref{fig:observer steps} reaches the bottom boundary at $x=0$.


\subsubsection{State estimation: step 2}
The previous subsection provides a method for obtaining an estimate of the state on the characteristic line $(x,\tau(t_k;x))$. Starting with this estimate of the past state in the well, it is possible to estimate the current state by solving the following dynamics from $\tau(t_k;\cdot)$ up to current time $t_k$:
\begin{align}
 \frac{\partial \bar{\alpha}_G(x,t)}{\partial t} &+ \bar{v}_G(x,t) \frac{\partial \bar{\alpha}_G(x,t)}{\partial x}   =  \bar{E}_G(x,t),  \label{alpha dynamics step2}\\
 \bar{\alpha} _G(0,t) &= \bar{\alpha} _G^{\text{inflow}}\big(\bar{p}(0,t)\big),  \\
\bar{p}(x,t) &= \bar{p}(\tau^{\text{inv}}_k(t),t) + \int_x^{\tau^{\text{inv}_k}(t)} (\bar{G} +\bar{F})(\xi,t) d\xi,\\
\bar{v}_G(x,t) &= \bar{v}_G(\tau^{\text{inv}}_k(t),t) - \int_x^{\tau^{\text{inv}}_k(t)} \frac{\partial \bar{v}_G(\xi,t)}{\partial x} d\xi, \label{v step2}
\end{align}
where $\bar{\alpha} _G^{\text{inflow}}(\cdot)$ is defined implicitly as the solution of
 (\ref{inflow simplified 1})-(\ref{inflow simplified 2}) for a given bottom hole pressure, and for given $k\in\N$, $\tau^{\text{inv}}_k(\cdot)$ is the inverse of $\tau(t_k;\cdot)$ in the second argument, i.e., $\tau^{\text{inv}}_k(\tau(t_k;x))=x$. That is, for $t\in[\tau(t_k;0),t_k]$, $\tau^{\text{inv}}_k(t)$ gives the $x$ such that $\tau(t_k;x)=t$.

Similar to above, one can show that the system (\ref{alpha dynamics step2})-(\ref{v step2}) has a solution on the determinate set 
\begin{equation}
\B(t_k) =  \left\{(x,t):\, x\in[0,L],\,t\in [\tau(t_k;x),t_k] \right\}.  \label{B cal}
\end{equation}
Importantly, the solution on the set $\B(t_k)$ contains the estimate of the current state $\alpha(\cdot,t_k)$.


\subsubsection{State estimation: algorithm}
The preparations from the previous subsections provide the following  algorithm for estimating the state at each sampling instance $t_k$, $k\in\N$. See also Figure \ref{fig:observer steps}.
\begin{algorithm}[H]
\begin{algorithmic}[1]
\REQUIRE measurement history $\Y^T(t_k)$ for $T\geq t_k-\tau(t_k;0)$  \\
\ENSURE estimate of state $\bar{\alpha}_G(\cdot,t_k)$\\ ~

\STATE solve (\ref{alpha dynamics x-direction})-(\ref{v x-direction}) in negative $x$-direction on $\A(t_k)$, to obtain estimate of past state $\bar{\alpha}_G(x,\tau(t_k,x)$, $\bar{p}_G(x,\tau(t_k,x)$ and $\bar{v}_G(x,\tau(t_k,x)$, for all $x\in[0,L]$  \label{algo line}
\STATE  solve (\ref{alpha dynamics step2})-(\ref{v step2}) in positive $t$-direction on $\B(t_k)$, using the estimate from step \ref{algo line}. as initial condition, to obtain estimate of $\bar{\alpha}_G(\cdot,t_k)$
\end{algorithmic}
\caption{State estimation algorithm }
\label{estimation algorithm}
\end{algorithm}

For all $t_k$ satisfying $\tau(t_k;0)\geq 0$, Algorithm \ref{estimation algorithm} provides an estimate of the distributed state, including gas volume fraction and pressure, in the well. If the dynamics in the well were exactly equal to the model used for observer design, (\ref{alpha dynamics simplified})-(\ref{inflow simplified 2}), then these estimates would be equal to the actual state in the well. See also \cite{li2008observability} and \cite{strecker2017output} for related state estimation results.
 Here, the condition $\tau(t_k;0)\geq 0$ basically requires that enough time has passed since the start of measurements, that gas has had time to travel all the way from the well bottom to the top. 
 
{\red Theorems showing the well-posedness and convergence of the proposed observer when applied to the simplified drift-flux model from Section \ref{sec: simplified} are summarized in Appendix \ref{appendix}.
}

\subsection{Control law}\label{subsec:controller}
\begin{figure}[htbp!]\centering
\includegraphics[width=.7\columnwidth]{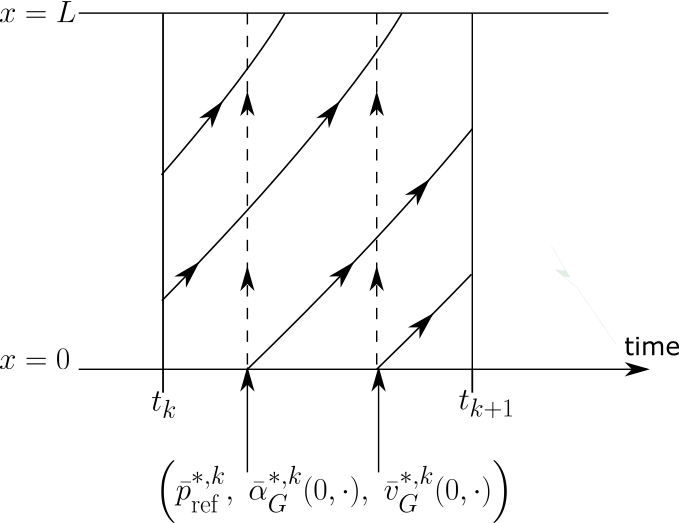}
\caption{Schematic of the computation of the control inputs over the interval $[t_k,t_{k+1}]$. Note the direction of the arrows in the dashed lines, indicating the integration path of the pressure and velocity, are in the opposite direction compared to Figure \ref{fig:observer steps}.}
\label{fig: control step}
\end{figure}

Similar to \cite{strecker2017output,strecker2021quasilinear2x2,gugat2011flow}, the idea of the control design is to start  with the desired bottom {\red pressure} values at $x=0$, which shall converge to $p_{\text{ref}}(t)$ but must also be compatible with the current state,  and to solve the {\red pressure equation} \emph{against} the propagation direction of the control input, in order to compute the trajectory that satisfies these target bottom boundary values. The control input, i.e., the topside pressure, is then set equal to the topside pressure of the target trajectory. 

For this purpose, let $\bar{\alpha}^{*,k}$, $\bar{p}^{*,k}$, etc., denote the target trajectory at the $k$-th time step, to which the system should be  equal under closed-loop control. As opposed to (\ref{p_choke}), where the control input enters at the topside boundary, we introduce a new input for the target system, $p_{\text{ref}}^{*,k}$, which is the bottom hole pressure that the target trajectory shall satisfy. The target dynamics are  given by
\begin{align}
\frac{\partial \bar{\alpha}_G^{*,k}}{\partial t} &+ \bar{v}_G^{*,k} \frac{\partial \bar{\alpha}_G^{*,k}}{\partial x} = \bar{E}_G^{*,k}, \label{target alpha}\\
\bar{p}^{*,k}(x,t) &=  {p}^{*,k}_{\text{ref}}(t) - \int_0^x \bar{G}^{*,k}(\xi,t) + \bar{F}^{*,k}(\xi,t) d\xi, \label{target p}\\
\bar{v}_G^{*,k}(x,t)& = \bar{v}^{*,k}_G(0,t) +  \int_0^x  \frac{\partial \bar{v}_G{*,k}(\xi,t)}{\partial \xi} d\xi, \label{target v}
\end{align}
where all terms $\bar{E}_G^{*,k}$, $\bar{F}^{*,k}$, etc., are defined as in Section \ref{sec: simplified} but evaluated at the target state, and
with the boundary conditions given implicitly by
\begin{align}
&\bar{\alpha}_G^{*,k}(0,t) \bar{v}_G^{*,k}(0,t) = \frac{k_G \,\max(0,\, p_{\text{res}} - {p}^{*,k}_{\text{ref}}(t))}{A\,\bar{\rho}_G^{*,k}(0,t)}, \label{target BC 1} \\
&\Big(1-\bar{\alpha}_G^{*,k}(0,t)\Big)\,\frac{\bar{v}_G^{*,k}(0,t)-v_{\infty}}{C_0} = \frac{W^{L,inj}(t)}{A\,\bar{\rho}_L^{*,k}(0,t)},\label{target BC 2}
\end{align}
and initial condition 
\begin{equation}
\bar{\alpha}_G^{*,k}(\cdot,t_k) = \bar{\alpha}_G(\cdot,t_k). \label{target IC}
\end{equation}
Note that the target bottom hole pressure, $p_{\text{ref}}^{*,k}$, enters in both (\ref{target p}) and (\ref{target BC 1}). 

The design of $p_{\text{ref}}^{*,k}$ must ensure continuity with the state estimate at time $t_k$ and should converge to the actual reference, $p_{\text{ref}}$, in a continuous fashion. Moreover, the time derivative of   $p_{\text{ref}}^{*,k}$ should remain sufficiently slow to avoid shock waves in the well or, mathematically speaking, a collision of characteristic lines. One design that satisfies these conditions is {\red
\begin{equation} 
p_{\text{ref}}^{*,k}(t)= \begin{cases} \bar{p}^k +  \bar{p}^{\prime}\cdot(t-t_k)\cdot  \operatorname{sign}(e^k) &  t \leq t_k+\frac{|e^k|}{\bar{p}^{\prime}}\\
 p_{\text{ref}} &  t > t_k+\frac{|e^k|}{\bar{p}^{\prime}}
 \end{cases}, \label{pref*}
\end{equation} 
where $\bar{p}^k = \bar{p}(0,t_k)$ and $e^k = p_{\text{ref}}-\bar{p}^k$} are  the estimated bottom hole pressure and tracking error at time $t_k$, respectively,  and $\bar{p}^{\prime}>0$ is the desired time-derivative of the bottom hole pressure. That is, $p_{\text{ref}}^{*,k}$ converges linearly with rate $\bar{p}^{\prime}$ to $p_{\text{ref}}$ and stays there once the reference is reached.
If $\bar{p}^{\prime}$ is chosen sufficiently small, one can again show that the system (\ref{target alpha})-(\ref{pref*}) is well-posed, i.e., it has a unique solution on the domain $[0,L]\times[t_k,t_{k+1}]$.

By setting 
\begin{equation}
p^{\text{top}}(t)=\bar{p}^{*,k}({\red L,}t)  \label{p_top closed loop}
\end{equation}
 for $t\in[t_k,t_{k+1}]$, and assuming exact model knowledge, the closed-loop trajectory of (\ref{alpha dynamics simplified})-(\ref{inflow simplified 2}) is equal to the target trajectory on the domain $[0,L]\times[t_k,t_{k+1}]$. In particular, the closed loop trajectory satisfies $\bar{p}(0,t)=p_{\text{ref}}^{*,k}(t)$ for all $t\in[t_k,t_{k+1}]$. See also \cite{strecker2017output,strecker2021quasilinear2x2,gugat2011flow} for comparison.
 
 The steps required to evaluate the  control law at each time step are summarized in the following Algorithm and also in Figure \ref{fig: control step}.

\begin{algorithm}[H]
\begin{algorithmic}[1]
\REQUIRE estimate of $\bar{\alpha}_G(\cdot,t_k)$ and $\bar{p}(0,t_k)$  \\
\ENSURE control input $p^{\text{top}}(t)$ for $t\in[t_k,t_{k+1}]$\\ ~

\STATE set $p^{*,k}_{\text{ref}}(t)$, $t\in[t_k,t_{k+1}]$, as per (\ref{pref*})
\STATE solve (\ref{target alpha})-(\ref{target IC}) over domain $[0,L]\times [t_k,t_{k+1}]$
\STATE set $p^{\text{top}}(t)$, $t\in[t_k,t_{k+1}]$, as per (\ref{p_top closed loop})
\end{algorithmic}
\caption{Control algorithm }
\label{control algorithm}
\end{algorithm}

{\red Well-posedness of the control law and convergence of the closed loop system when applied to the simplified drift-flux model from Section \ref{sec: simplified} are  discussed in Appendix \ref{appendix}.
}

 \subsection{Estimation of reservoir parameters} \label{sec: estimation parameters}
 
Step \ref{algo line} in Algorithm \ref{estimation algorithm} can also be used to estimate the production index $k_G$ and pressure $p_{\text{res}}$ of the reservoir, which might be uncertain in practice, using only  measurements at the topside boundary. For this, note that the system  (\ref{alpha dynamics x-direction})-(\ref{v x-direction}) does not depend on the  boundary condition at $x=0$ modelling the gas influx as given by (\ref{inflow simplified 1}). That is, Step \ref{algo line} in Algorithm \ref{estimation algorithm} provides estimates of the bottom hole gas volume fraction and pressure, $\bar{\alpha}_G(0,t)$ and $p(0,t)$, and thus the gas influx, over the past interval $t\in[t_k-T,\tau(t_k;0)]$, using only the history of topside measurements but not the boundary condition at the bottom of the well. In Figure \ref{fig:observer steps}, the time interval $t\in[t_k-T,\tau(t_k;0)]$ corresponds to the times where the blue domain reaches the bottom boundary at $x=0$. The uncertain values of $k_G$ and/or $p_{\text{res}}$ can then be estimated via  standard  least-square curve fitting.
 
For $k\in \N$, let $\theta_k^i\in[t_k-T,\tau(t_k;0)]$, $i=1,\ldots,I_k$, be sampling instances over the interval $[t_k-T,\tau(t_k;0)]$.  Let 
\begin{align}
\hat{w}_{G,k}^i &=  \bar{\alpha}_G(0,\theta_k^i)\,  \bar{v}_G(0,\theta_k^i) \, \bar{\rho}_G(0,\theta_k^i) \,A, \\
\hat{p}_k^i &=  \bar{p}(0,\theta_k^i),
\end{align} 
be the estimates of the gas influx and bottom hole pressure at these sampling instances as returned by step \ref{algo line} in Algorithm \ref{estimation algorithm}. At each time step $t_k$, the  past estimates from all previous steps up to that time can be concatenated as
\begin{align}
  \left(\begin{matrix}
\hat{W}_{G,k}^1\\ \vdots \\ \hat{W}_{G,k}^{N_k}
\end{matrix}  \right) &=
 \left(\begin{matrix}
\hat{w}_{G,1}^1\\ \vdots \\ \hat{w}_{G,1}^{I_1} \\ \hat{w}_{G,2}^1\\ \vdots \\\hat{w}_{G,2}^{I_2}\\ \vdots \\ \hat{w}_{G,k}^1\\ \vdots \\\hat{w}_{G,k}^{I_k}
\end{matrix}  \right), &
 \left(\begin{matrix}
\hat{P}_{k}^1\\ \vdots \\ \hat{P}_{k}^{N_k}
\end{matrix}  \right) &=
 \left(\begin{matrix}
\hat{p}_{1}^1\\ \vdots \\ \hat{p}_{1}^{I_1} \\ \hat{p}_{2}^1\\ \vdots \\ \hat{p}_{2}^{I_2}\\ \vdots \\ \hat{p}_{k}^1\\ \vdots \\\hat{p}_{k}^{I_k}
\end{matrix}  \right),
\end{align} 
%
 where $N_k=\sum_{j=1}^k I_j$.  By choosing $T$ sufficiently large, it can be ensured that there is no gap between sampling points at consecutive steps, $\theta_k^{I_k}$ and $\theta_{k+1}^1$. Moreover, if required the samples can be processed further to, e.g., remove duplicate samples or to ensure equal spacing. 
 
Once the past estimates of the gas influx and bottom hole pressure have been collected, the production index and reservoir pressure can be estimated by solving the optimization problem \begin{equation}
\left\{\hat{k}_G^k,\,\hat{p}^k_{\text{res}}\right\} = \operatorname*{arg\,min}_{\hat{k}_G,\,\hat{p}_{\text{res}}} \sum_{i=1}^{N_k} \left| \hat{W}_{G,k}^i - \hat{k}_G \max (0,\hat{p}_{\text{res}}-\hat{P}_{k}^i  )   \right|^2.  \label{curve fit}
\end{equation}
It should be noted that the curve fitting procedure does not have to be of the exactly of the form (\ref{curve fit}). For instance,  weights could be put on the  different samples. It is also possible to use other nonlinear functions to model the relationship between bottom hole pressure and gas influx such as polynomials of the pressure difference $\hat{p}_{\text{res}}-\hat{P}_{k}^i$.

\subsection{\red Adding an integral term } \label{sec:integral term}
{\red
The feedback control law in Section \ref{subsec:controller} can be seen as a static nonlinear feedback gain, similar to the proportional gain in classic linear control. In the presence of modelling errors, using such a static gain can lead to a tracking error at steady state (see also the simulations in Section \ref{sec:MonteCarlo}). Such steady state tracking errors can be corrected if (infrequent) measurements of the bottom hole pressure are available, by adding an correction term involving the integral of the tracking error. One of the main advantages of the controller from Section \ref{subsec:controller} is that it, combined with the observer from Section \ref{sec:observer}, only requires topside measurements. However, some, potentially infrequent downhole pressure measurements might be available in practice, in which case it is desirable to reduce any pressure tracking errors.

Let $\tilde{t}_i$ be the sampling instances where downhole pressure measurements are available, with in general slower sampling rate $\tilde{t}_{i+1}-\tilde{t}_i \gg {t}_{k+1}-{t}_k$. Define the integral term as
\begin{align}
\Pi_0 &=0, \\
 \Pi_{i}&= \Pi_{i-1} + K_I\times \left( p_{\text{ref}} - p(0,\tilde{t}_i)\right)  \times\left(\tilde{t}_i - \tilde{t}_{i-1} \right),
\end{align}
with integral gain $K_I$. Then, the topside pressure as given in (\ref{p_top closed loop}) can be modified to 
\begin{equation}
p^{\text{top}}(t)=\bar{p}^{*,k}({\red L,}t) + \Pi_{i}   \label{p_top closed loop integral term}
\end{equation}
for all $k$ with $t_k\in[\tilde{t}_i, \tilde{t}_{i+1}]$. In order to avoid that the more aggressive static feedback term compensates the much slower integral term, the pressure offset must be considered in the measurement as in
\begin{equation}
Y(t) =  \left(\begin{matrix}{\alpha}_G(L,t)\\{p}(L,t) - \Pi_i  \\{v}_G(L,t) \end{matrix}\right),  \label{Y(t) integral}
\end{equation}
with $i$ such that $t\in [\tilde{t}_i, \tilde{t}_{i+1}]$.

It should be noted that with infrequent sampling of the downhole pressure (say, in the order of once per hour), the integral term does hardly contribute to stabilization of the pressure (which would require more frequent sampling  \cite{Pedersen2018}), but only acts to reduce the steady state tracking error.

}

\section{Numerical simulation}


\begin{table}[!htbp]
\caption{Parameters}
\label{table}
\setlength{\tabcolsep}{3pt}
\begin{tabular}{| r  l || r l || r l |}
\hline
$L=$& $2500$\,m & 
$\rho_{L,0}=$&   $975$\,kg/m$^3$ &
$p_{\text{res}}=$ &  $266$\,bar   \\
$A=$&   $0.012$\,m$^2$&
$c_L=$&   $1000$\,m/s &
$k_G=$ &  $0.01$\,kg/(s\,bar)  \\
$D=$ &  $0.0635$\,m&
$c_G=$ &  $315$\,m/s&
$W^{L,inj}=$ &  $13$\,kg/s\\
$f=$&   $0.03$ & 
$C_0=$ &  $1.1$ &
$v_{\infty}=$ &  $0.1$\,m/s\\
$\theta =$ & $10$\,min &
$\bar{p}^{\prime}=$ & $10$\,bar/h&
$\alpha_G(\cdot,0)\equiv$ & $ 0$ \\
\hline
\end{tabular}
\label{table parameters}
\end{table}

\subsection{Simulation parameters}

We demonstrate the performance of the proposed control law in  numerical simulations of a well with the parameters given in Table \ref{table parameters}. The dynamics in the well are modelled using the drift-flux model introduced in Sections \ref{sec:drift flux} \ref{sec:boundary conditions} while the simplified model from Section \ref{sec: simplified} is only used for the output feedback control law. A first-order finite difference scheme with 50 discretization elements is applied to convert all PDEs (the system dynamics (\ref{mass balance L})-(\ref{momentum balance}) and all PDEs in Algorithms \ref{estimation algorithm} and \ref{control algorithm}) into high-order ODEs (``method of lines''). The resulting ODEs are then solved in matlab by use of  \lstinline{ode23tb}. 

At each sampling instance, the topside pressures are precomputed over a $\theta=10$\,minute interval. The algorithm provides a continuously varying signal for the topside pressure. However, in practice, the choke on an actual rig is usually not manipulated continuously. In  order to emulate this, a further zero-order hold with period 2\,minutes is applied to the original topside pressure signal, {\red  so that the topside pressure becomes a piecewise-constant signal that changes every 2 minutes, and attains 5 different values   over each 10\,minute period. That is, due to the zero-order hold, the actual topside pressure that is applied to the system deviates slightly from the output of Algorithm \ref{control algorithm}. }
Pre-computing the control inputs for each 10-minute period, which involves solving the PDEs outlined in Algorithms \ref{estimation algorithm} and \ref{control algorithm},  takes less than 1\,second on a standard laptop, i.e., a fraction of the sampling interval.

{\red The simulation presented below deviate from the formal analysis in Appendix \ref{appendix}, which  focuses on the simplified drift-flux model from Section \ref{sec: simplified} in closed loop with the proposed estimation and control scheme for nominal parameters, in that the simulation model is different to the model used for control design, that the control inputs are applied in a zero-order hold fashion, that the parameter identification scheme and integral action from Sections \ref{sec: estimation parameters} and \ref{sec:integral term} are applied (which were not part of the nominal design analysed in the appendix), and that uncertainty in parameters and disturbances/noise affecting the  measurements and control input are included.
Thereby, the simulations serve to demonstrate that the proposed estimation and control method not only works in the ideal case, as proven in the appendix, but also shows robustness with respect to issues that need to be expected in practical applications.
}

\subsection{Simulation results - nominal design}
\begin{figure}[htbp!]\centering
\includegraphics[width=.9\columnwidth]{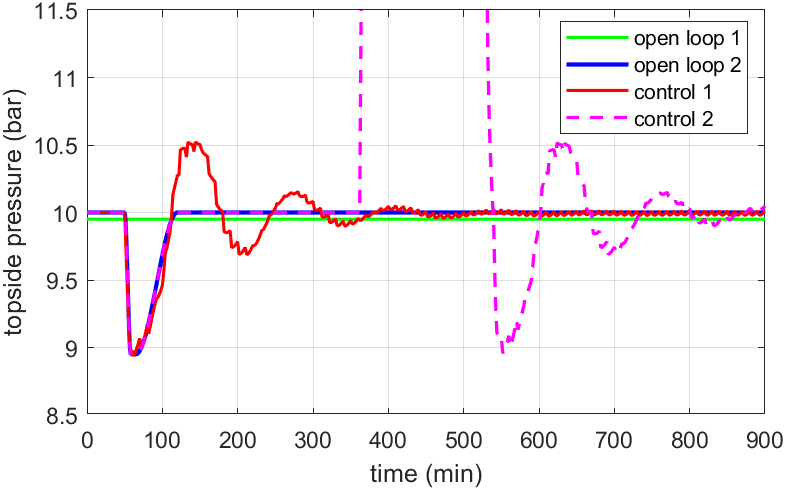}
\includegraphics[width=.9\columnwidth]{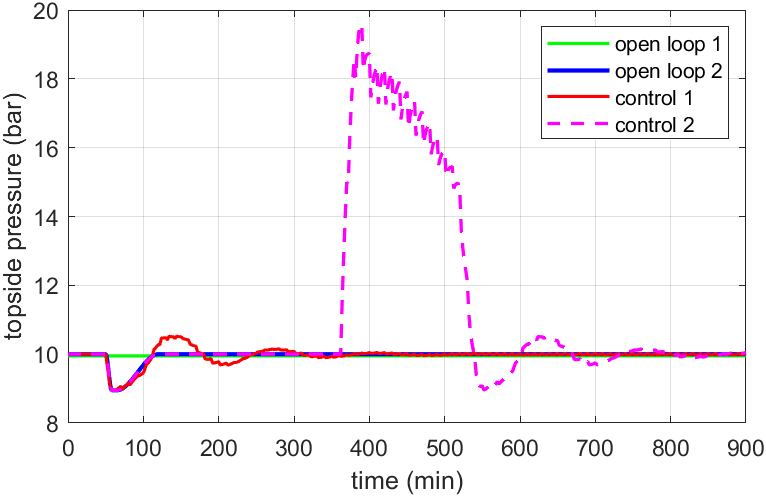}
\caption{Topside pressures as computed by  the feedback control law presented in this paper in the two scenarios, and  two open-loop alternatives. Although the topside pressures convergence to the same point, they correspond to three different equilibria, see Fig. \ref{fig:pressures}. The top figure is a zoom in of the bottom figure.}
\label{fig:ptop}
\end{figure}

\begin{figure}[htbp!]\centering
\includegraphics[width=.9\columnwidth]{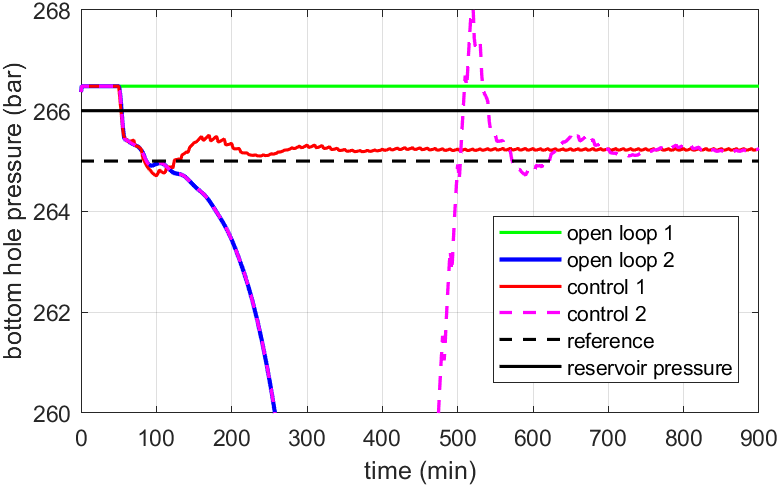}\\
\includegraphics[width=.9\columnwidth]{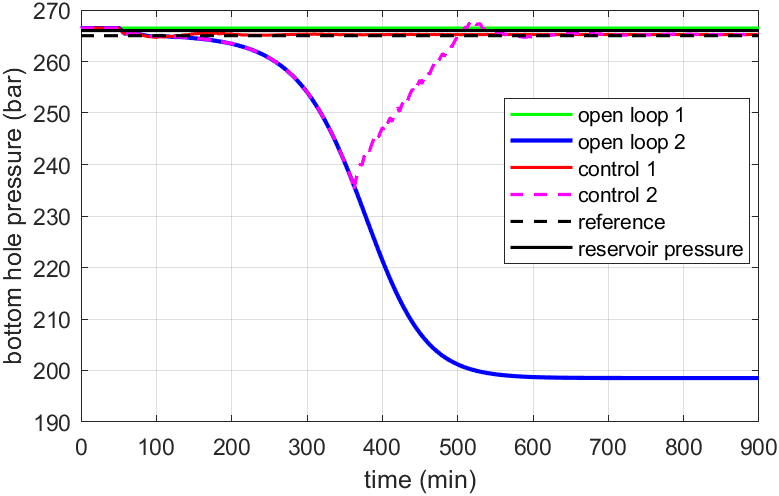}
\caption{Comparison of bottomhole pressure trajectories when using the proposed feedback control law in the two scenarios  and the two open-loop alternatives. The top figure is a zoom in of the bottom figure.}
\label{fig:pbottom}
\end{figure}

\begin{figure}[htbp!]\centering
\includegraphics[width=.9\columnwidth]{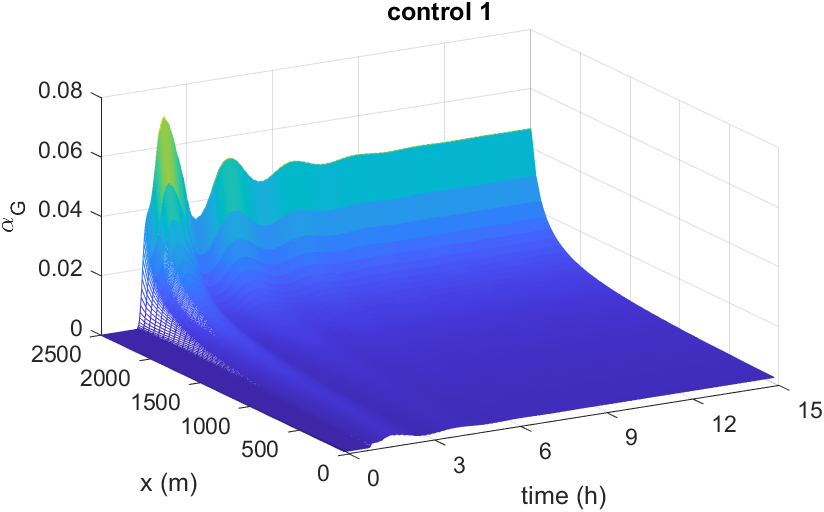}\\
\includegraphics[width=.9\columnwidth]{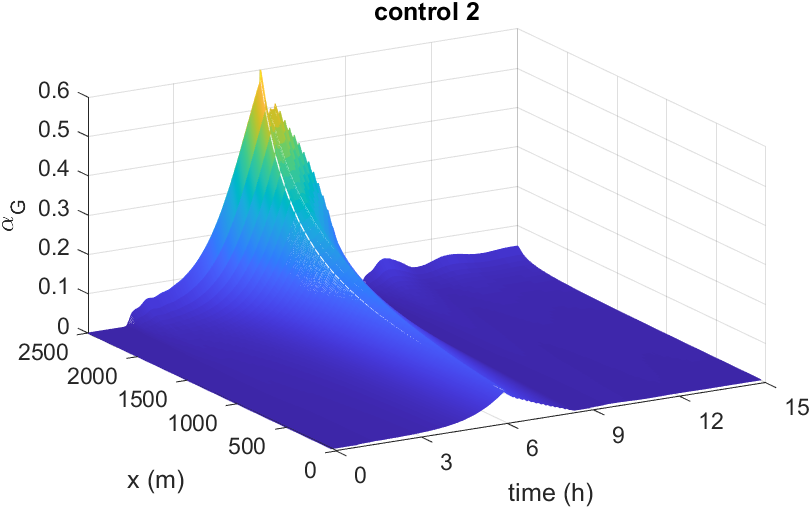}\\
\includegraphics[width=.9\columnwidth]{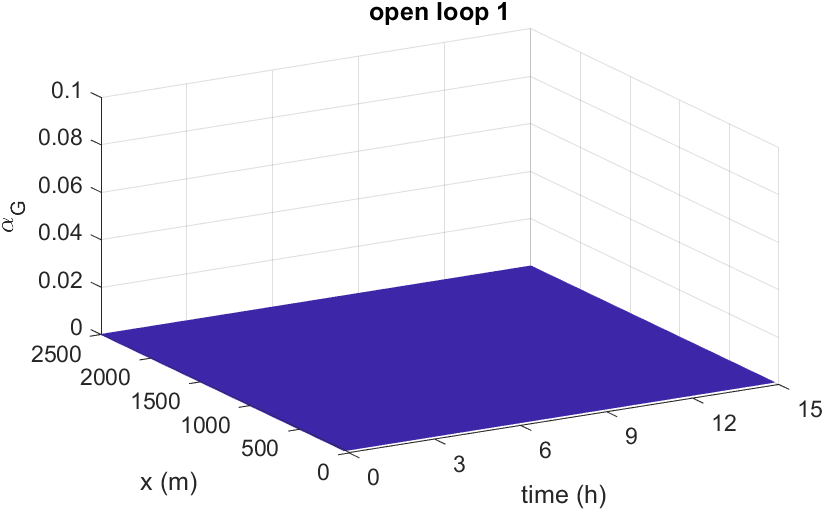}\\
\includegraphics[width=.9\columnwidth]{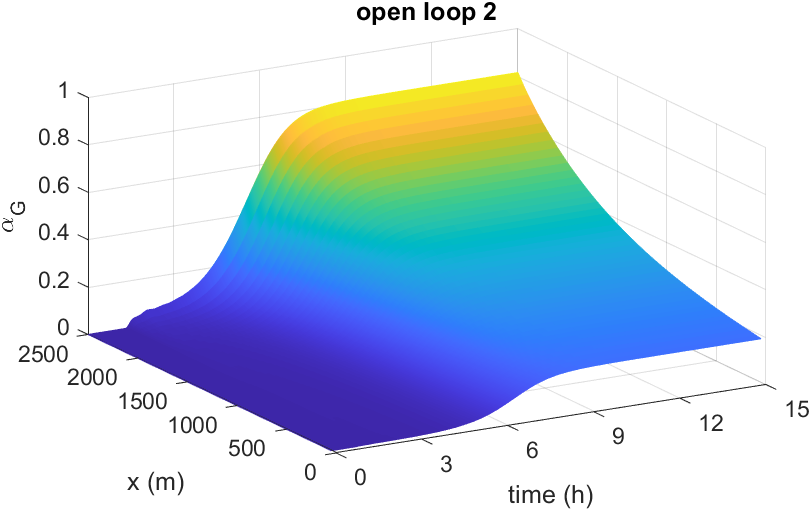}
\caption{Gas concentration $\alpha_G$ using the feedback control law presented in this paper and the two open-loop alternatives.}
\label{fig:alpha}
\end{figure}

In this section we demonstrate the controller performance in simulations where the well parameters are assumed known. The topside and bottom hole pressure trajectories are shown in Figure \ref{fig:ptop} and \ref{fig:pbottom}, respectively. The gas volume fraction is shown in Figure \ref{fig:alpha}. At the initial condition there is no gas in the well. 

In the trajectory titled ``control 1'', the topside pressure is initially held at 10\,bar until the control law is activated at $t=50$\,minutes. While the topside pressure is at 10\,bar, the bottom hole pressure sits slightly above the reservoir pressure at 266.5\,bar, so that there is no inflow of gas.  Once the controller is activated, it lowers the topside and, thus, the bottom hole pressure. Consequently, gas starts to enter the well. The presence of gas in the well lowers the pressure difference between topside and well bottom (because the light gas reduces the weight of the liquid/gas column), which further lowers the bottom hole pressure. The controller uses the estimate of $\bar{\alpha}_G$ to compensate this effect  and stabilizes the bottom hole pressure close to the reference value at 1\,bar below the reservoir pressure. Note again that the feedback controller uses no measurements of the downhole pressure, which leads to the small offset between down hole pressure reference and asymptotically achieved down hole pressure.    As shown in Figure \ref{fig:alpha}, once the bottom hole pressure is settled at 265\,bar, the gas concentration stabilizes at around 0.3\% (by area) at the well bottom and expands to approximately 6.5\% at the top of the well. The controller achieves stabilization of the bottom pressure close to the reference despite the mismatch between the drift-flux model used for simulation and the simplified model used for computation of the control.

In the second closed-loop simulation  (``control 2''), the feedback controller is only activated at time $t=6$\,hours. Before that, the topside pressure initially decreases as the trajectory in ``control 1'' in order to initiate a gas inflow, but is then eventually brought back to the equilibrium at $10$\,bar. This blow-out scenario is described in more detail in the following subsection under ``open loop 2''. Briefly speaking, the equilibrium corresponding to the reference down hole pressure is unstable, and the gas entering the well leads to a severe drop in the down hole pressure (approximately 30\,bar by the time the feedback controller is activated, leading to a gas concentration of 60\% at the top of the well). However, the feedback controller again manages to estimate the gas distribution in the well with sufficient accuracy,  compensates its effect on the pressure in the well by increasing the topside pressure for a period of time, and brings the down hole pressure back to the reference.

In these simulations, the delay $t_k-\tau(t_k,0)$ is just over 30\,minutes at all time steps. Thus, saving the measurements over a horizon of $T=40$\,minutes is a conservative choice to ensure that the steps in Algorithm \ref{estimation algorithm} are well-posed.

\subsection{Comparison with open-loop control}
For comparison, Figures \ref{fig:ptop}-\ref{fig:alpha} also show the trajectories corresponding to two open-loop topside pressure signals. In the first alternative (``open loop 1''), the topside pressure is held constantly at 10\,bar. Since there is no gas in the well at the initial condition, the bottom hole pressure remains slightly above the reservoir pressure, i.e., in an over-balanced situation, for the entire simulation.

In the second open-loop alternative (``open loop 2''), the topside pressure signal   drops like in the closed-loop simulation in order to initiate a gas inflow at the well bottom, before recovering to the average topside pressure of the closed-loop case at  10\,bar. However, the equilibrium corresponding to the reference bottom hole  pressure is unstable. That is, the gas inflow reduces the gravitational pressure drop in the well, which further reduces the bottom hole pressure and increases the gas inflow, until it reaches a  stable equilibrium at almost 70\,bar below the reference.  As shown in Figure \ref{fig:alpha}, the large gas influx due to the low bottom hole pressure leads to a gas concentration of approximately 18\% by area at the well bottom and just over 80\% at the top.

\subsection{Estimation of  reservoir parameters}

\begin{figure}[htbp!]\centering
\includegraphics[width=.9\columnwidth]{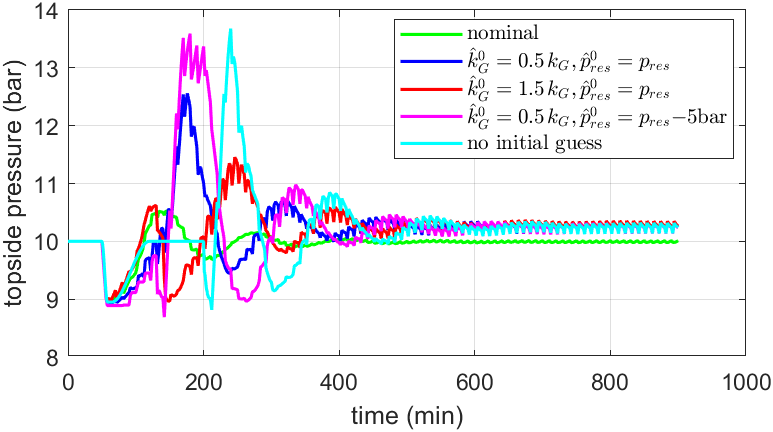}\\
\includegraphics[width=.9\columnwidth]{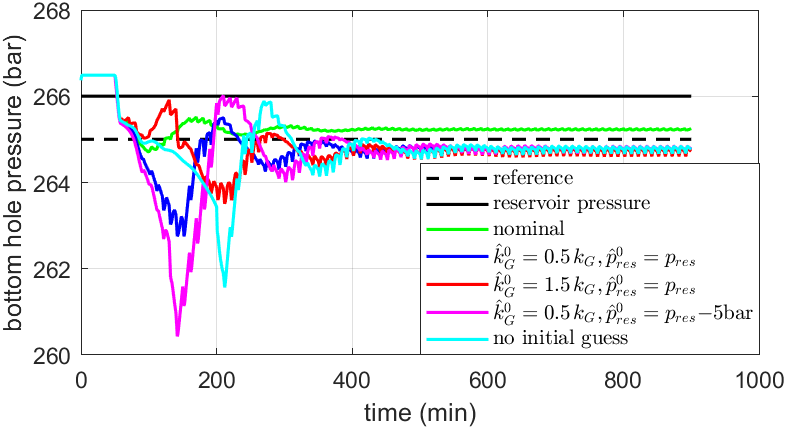}
\caption{Comparison of  pressure trajectories for nominal and uncertain reservoir parameters.}
\label{fig:pressures uncertain}
\end{figure}

In this section we demonstrate the performance of both the  controller and the  parameter estimation scheme from Section \ref{sec: estimation parameters}. Here, the  parameters from Table \ref{table parameters} are used to simulate the well but the reservoir parameters $k_G$ and $p_{\text{res}}$ are assumed to be uncertain. 

Figure \ref{fig:pressures uncertain} shows the topside and bottom hole pressure trajectories for five different simulations. In three of these simulations, the initial guess $\hat{k}_G^0$ overestimates or underestimates the actual production index by 50\%, respectively, and in one of them  the initial guess $\hat{p}_{\text{res}}^0$ of the reservoir pressure also underestimates the actual value by 5\,bar. In each of these simulations, the output feedback controller uses the initial values $\hat{k}_G^0$ and $\hat{p}_{\text{res}}^0$ until there is one instance at which the estimated gas influx $\hat{W}_{G,k}^i$ exceeds 1\,kg/min. Once this threshold is exceeded, the reservoir parameters are estimated at each following time step as described in Section \ref{sec: estimation parameters} based on the current set of samples, and the  updated estimates $\hat{k}_G^k$ and $\hat{p}_{\text{res}}^k$ are used  both Algorithm \ref{estimation algorithm} for  state estimation and in Algorithm \ref{control algorithm} to compute the control inputs. 

The only modification compared to Section \ref{sec: estimation parameters} is the inclusion of a simple data processing step, in that at each $t_k$, the new estimation samples $\hat{w}_{G,k}^i$ and $\hat{p}_k^i$, $i=1,\ldots,I_k$, are only added to the overall set of samples used for curve-fitting  if they satisfy the following condition:
\begin{align}
\min_{j\leq k-1} & \left|\operatorname*{mean}_{i=1\ldots I_k} (\hat{w}_{G,k}^i) - \operatorname*{mean}_{i=1\ldots I_j} (\hat{w}_{G,j}^i) \right| \geq 0.05\,\text{kg/min} \label{condition samples 1} \\ \intertext{or}
\min_{j\leq k-1} & \left|\operatorname*{mean}_{i=1\ldots I_k} (\hat{p}_{k}^i) - \operatorname*{mean}_{i=1\ldots I_j} (\hat{p}_{j}^i) \right| \geq 0.05\,\text{bar}.\label{condition samples 2}
\end{align}
This is to prevent that once the system settles around steady state, more and more almost identical samples  of $\hat{w}_{G,k}^i$ and $\hat{p}_{k}^i$ keep getting added. Otherwise,   excessive weight  would be put on the accumulation of samples around the equilibrium, which could ultimately cause the solution of   optimization problem (\ref{curve fit}) to slowly drift as more and more samples around steady state keep getting added.

 In another simulation scenario, no initial guess of $\hat{k}_G^0$ and $\hat{p}_{\text{res}}^0$ is used. Instead, the topside pressure is set equal to the one used in the simulation ``open loop 2'' described in the previous section. As discussed above, this open-loop signal induces a gas influx and drop in the bottom hole pressure. Once the estimated gas influx $\hat{W}_{G,k}^i$ exceeds the threshold of 1\,kg/min at one sampling instance, the estimates $\hat{k}_G^k$ and $\hat{p}_{\text{res}}^k$ are again obtained as in the previous case and the feedback controller is activated. 
For comparison, the nominal simulation where the exact values of $k_G$ and $p_{\text{res}}$  are available to the controller is also shown.

As shown in Figure \ref{fig:pressures uncertain}, the adaptive feedback controller manages to stabilize the bottom hole pressure close to the reference in all simulations. Uncertainty in the reservoir parameters does affect the solution  during transients, before the uncertain parameters are identified. In each of the four cases with uncertainty, the estimate $\hat{k}_G^k$ settles between 7-11\% below the actual value $k_G$, and $\hat{p}_{\text{res}}^k$ settles between 0.2-0.3\,bar below $p_{\text{res}}$. This error leads to the slightly lower  bottom hole pressures to which the four trajectories with uncertainty converge in Figure \ref{fig:pressures uncertain}.

\begin{figure}[htbp!]\centering
\includegraphics[width=.9\columnwidth]{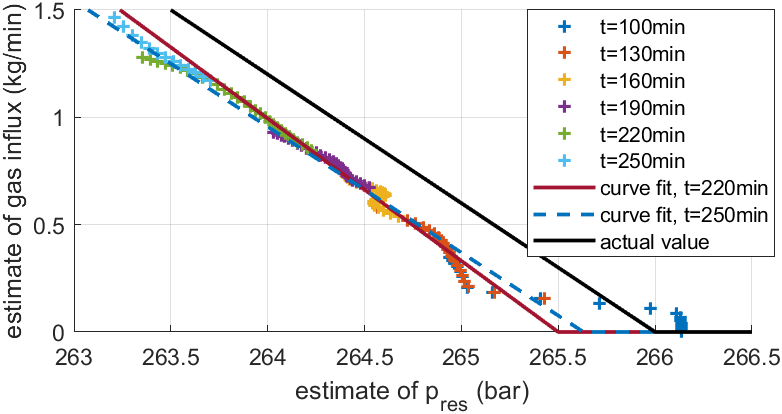}\\
\includegraphics[width=.9\columnwidth]{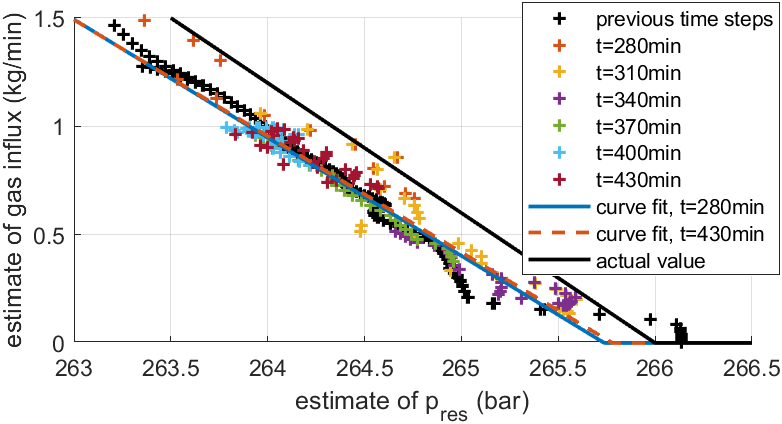}
\caption{ Samples of gas influx estimates $\hat{w}_{G,k}^i$ and bottom hole pressure estimates $\hat{p}_{k}^i$, $i=1,\ldots,I_k$, at different time steps $t_k$ (shown only for every third $t_k$), for the simulation where no initial guesses $\hat{k}_{G}^0$ and $\hat{p}_{\text{res}}^0$ are used. The lines show the  curve fit at different times and the actual relationship as given by (\ref{inflow G}).  }
\label{fig:estimate bottom BC uncertain}
\end{figure}

The parameter identification steps are investigated more closely in Figure \ref{fig:estimate bottom BC uncertain}, at the example of the trajectory where the system is initially operated in open-loop  and no initial guesses $\hat{k}_{G}^0$ and $\hat{p}_{\text{res}}^0$ are used.  The top figure shows samples of $\hat{w}_{G,k}^i$ and $\hat{p}_{k}^i$ up to time $t_k=250$\,min where the bottom hole pressure is still decreasing with time. Despite numerical errors, the samples lie close to a line (except for the very first estimated influxes up to around 0.1\,kg/min), and a good curve fit is possible once the threshold of 1\,kg/min is exceeded. When the pressure increases (corresponding to the samples shown in the bottom figure), the influx estimates tend to be slightly higher for the same pressure compared to when the pressure decreases. This can be attributed to numerical inaccuracies. In the bottom figure, one can also see the accumulation of samples around 264.5\,bar and 0.8\,kg/min for times after around $t_k=350$\,min, which is when the systems starts to settle around the equilibrium. The curve fit changes little once the maximum influx has been reached, and conditions (\ref{condition samples 1})-(\ref{condition samples 2}) for the inclusion of new samples are not satisfied any more after $t_k=390$\,min.  Compared to the actual influx as given by (\ref{inflow G}), the parameter estimation scheme tends to estimate that the same amount of gas influx occurs at a slightly lower bottom hole pressure (i.e., the estimated samples and fitted curve lie to the left of the black lines in Figure \ref{fig:estimate bottom BC uncertain}), which leads to the offset of approximately 0.5\,bar between the adaptive simulations and the nominal simulation in Figure \ref{fig:pressures uncertain}.

\subsection{\red Monte Carlo Simulations}  \label{sec:MonteCarlo}
{\red
In this section we demonstrate the controller performance in Monte Carlo type simulations with parametric uncertainty and disturbances/noise affecting the measurement and actuation signals. Here, the estimation and control schemes use the nominal parameters given in Table \ref{table parameters}, whereas the actual parameters $f$ (friction factor) and $c_G$ (gas compressibility) used in the drift flux model vary by 5\% around the nominal value. Moreover, we add a random 5\% disturbance/noise signal to the topside measurement of the gas concentration, $\alpha_G(L,t)$, and another random, unmeasured disturbance to the topside pressure $p^{\text{top}}$ of $\pm0.5$\,bar (about 5\% of 10\,bar which is the nominal topside pressure). In order to vary the initial condition, the topside pressure is manually set to in between 8\,bar and 10\,bar, which brings the well into the under-balanced range for most parameter samples and induces a gas influx before the feedback controller is activated after 2\,hours.  Moreover, we show the same simulation for a shorter, 1000\,m deep well. Here, the simulations are run for 100  samples in the given range, including the  18 extreme points where the uncertainty is either 0 or $\pm5$\% and for the initial topside pressure is either 8\,bar or 10\,bar, as well as 82 random samples within this range. The reference pressures are lowered compared to the previous simulations so that despite the error in the friction factor all simulations are in the under-balanced range.

\begin{figure}[htbp!]\centering
\includegraphics[width=.9\columnwidth]{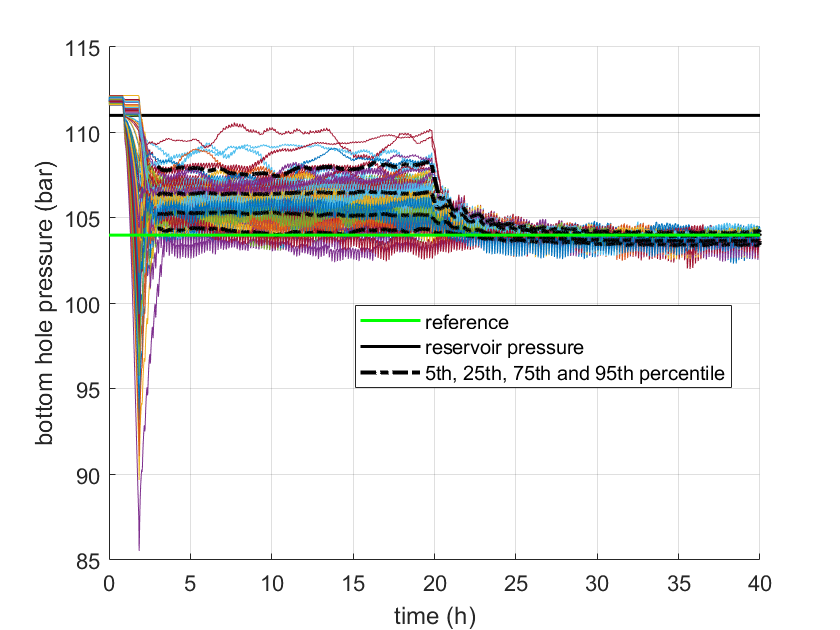}\\
\includegraphics[width=.9\columnwidth]{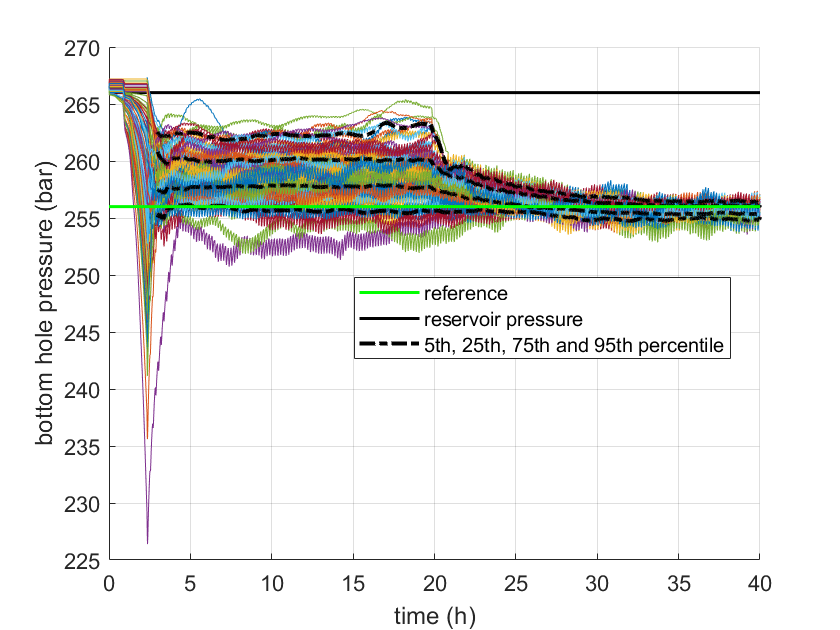}
\caption{\red Monte Carlo simulations with parametric uncertainty and disturbance/noise terms as described in the text, for the 1000\,m (top) and 2500\,m (bottom) deep wells. The plots show the individual bottom hole pressure trajectories (thinner lines) as well as the 5th, 25th, 75th and 95th percentiles. The integral term is activated at $t=20$\,hours.   }
\label{fig:MonteCarlo}
\end{figure}

\red
The simulated trajectories are shown in Figure \ref{fig:MonteCarlo}. The model errors lead to a larger offset between the achieved bottom hole pressures and reference, but the bottom hole pressures stabilize within a few bar of the reference in all simulations. The time-varying noise and disturbance terms cause some fluctuations in the pressure trajectories. 

 In order to compensate the steady-state tracking error, the integral term introduced in Section \ref{sec:integral term} is activated at time $t=20$\,hours with a gain of $K_I = \frac{0.1}{3600\,s}$ and sampling period of 1\,hour. That is, sampling of the downhole pressure is asynchronous with the topside measurement and the integral term gets updated much less frequently than the topside pressure. Still, the integral term helps to quickly bring the bottom hole pressure close to the reference, with minor remaining fluctuations due to the noise and disturbances.

}

\section{Conclusions}
We presented a feedback control design for underbalanced drilling using only measurements and actuators located topside on the drilling rig and with uncertainty in reservoir parameters. In simulations with an industry standard drift-flux model as the plant, the proposed controller manages to stabilize the downhole pressure at an open-loop unstable setpoint slightly below the reservoir pressure (the point considered most difficult to control \cite{Graham2004,Boyun2002}). The method also shows robustness to sampling{\red, modelling errors and disturbances/noise affecting the topside measurements and actuation. For scenarios where infrequent measurements of the bottom hole pressure are available (in the order of once per hour), such measurements can be fed back in an integral fashion to compensate tracking errors caused by parametric uncertainty.  } Independently of whether the presented feedback controller or an alternative strategy is used for pressure control, the proposed estimation scheme provides estimates of the distributed gas concentration, downhole pressure, reservoir pressure, and production index using only topside measurements. Finally, the results serve as a verification that the simplified model from \cite{aarsnes2016simplified} captures the dominant dynamics of the two-phase drift flux mode that are most relevant for control design.

{\red The simulations presented in this paper deviate to some extend from the theoretical stability analysis, in that the model used for control design is different from the plant model. Therefore, in future work the theory should be extended to close this gap. While \cite{strecker2021quasilinear2x2} provides a conservative  robustness analysis for a related system, sharper certificates for robustness with respect to model uncertainty and sampling would be highly desirable. Another direction for  future work would be event-triggered schemes \cite{heemels2012introduction,espitia2020observer}, which might help to further reduce the control effort by only updating the actuation when it is truly needed.
 }

\appendices

\section{\red Proof of well-posedness and convergence  for the simplified drift flux model} \label{appendix}
{\red
In this section, we prove well-posedness of the observer  and control law from Sections \ref{sec:observer} and \ref{subsec:controller}, respectively, as well as stability of the closed-loop system consisting of the simplified drift-flux model (\ref{alpha dynamics simplified})-(\ref{inflow simplified 2}), the observer (Algorithm \ref{estimation algorithm}) and the feedback control law (Algorithm \ref{control algorithm}). Let
\begin{align}
X(x,t) &=  \left(\begin{matrix}\bar{\alpha}_G(x,t)&\bar{p}(x,t)&\bar{v}_G(x,t) \end{matrix}\right)^T,  \label{X(t)} \\
\tilde{X}(x,t) &=  \left(\begin{matrix}\bar{\alpha}_G(x,t)&\bar{p}(x,t)-\bar{p}_0(x)&\bar{v}_G(x,t)-\bar{v}_{G,0}(x) \end{matrix}\right)^T \\
\tilde{Y}(t) &= \tilde{X}(L,t)  , 
\end{align}
where $\bar{p}_0$ and $\bar{v}_{G,0}$ are the steady state pressures and velocities corresponding to zero gas concentration and $\bar{p}_0(0,t)=p_{\text{ref}}$.  Note that $\frac{\partial}{\partial t} {X}(x,t) = \frac{\partial}{\partial t} \tilde{X}(x,t)$ for all $x\in[0,L]$.

\begin{lemma} \label{thm:1}
Fix $k\in\N$. Assume the measurements $Y(t)$ as defined in (\ref{Y(t)}) is Lipschitz-continuous.  There exist  constants $\delta_1>0$ and $\delta^{\prime}_1>0$ such that if $\|\tilde{Y}(t) \|\leq \delta_1$ and $\|\frac{\partial}{\partial_t}\tilde{Y}(t) \|\leq \delta^{\prime}_1$ for all $t\in[t_k-T,t_k]$ with  $T>0$ such that $\tau(t_k;0)\geq t_k-T$ (with $\tau$ as defined in (\ref{tau})), then the system (\ref{alpha dynamics x-direction})-(\ref{v x-direction}) has a unique Lipschitz-continuous solution on the domain $\A(t_k)$. Moreover, there exists a constant $c_1^{\prime}$ such that $\operatorname*{ess\,sup}_{(x,t)\in\A(t_k)}\|\frac{\partial}{\partial t}X(x,t)\|\leq c_1\,\operatorname*{ess\,sup}_{t\in[t_k-T,t_k]} \|\frac{\partial}{\partial t}Y(t)\|$.
\end{lemma}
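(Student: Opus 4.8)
The plan is to treat the system (\ref{alpha dynamics x-direction})--(\ref{v x-direction}) as a quasilinear transport equation in which the spatial coordinate $x$ plays the role of the evolution variable and $t$ the role of the ``spatial'' variable, and to obtain well-posedness by a contraction-mapping argument on a small ball of Lipschitz functions over $\A(t_k)$. Writing (\ref{alpha dynamics x-direction}) as $\bar v_G\,\partial_x\bar\alpha_G+\partial_t\bar\alpha_G=\bar E_G$, its characteristics in the $(x,t)$-plane satisfy $dt/dx=1/\bar v_G(x,t)$, which is exactly the family of curves $t\mapsto\tau(t;x)$ from (\ref{tau}); the curve through $(L,t_k)$ is the upper boundary of $\A(t_k)$, and the curves through $(L,s)$, $s\in[t_k-T,t_k]$, foliate $\A(t_k)$. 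Along each characteristic one has $\tfrac{d}{dx}\bar\alpha_G=\bar E_G/\bar v_G$, started from $\bar\alpha_G(L,s)=\alpha_G(L,s)$. First I would set up the iteration: given a candidate $\bar\alpha_G$, at each fixed $t$ solve the ODEs $\partial_x\bar p=-(\bar G+\bar F)$ and $\partial_x\bar v_G=-\tfrac{C_0\bar\alpha_G\bar v_G}{\bar p}\partial_x\bar p$ in the negative $x$-direction from the measured data $\bar p(L,t)=p(L,t)$, $\bar v_G(L,t)=v_G(L,t)$ (reproducing (\ref{p x-direction})--(\ref{v x-direction})), and then integrate the characteristic ODEs to produce a new $\bar\alpha_G$.

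The role of the smallness bounds $\delta_1$, $\delta_1'$ is to keep the trajectory in a neighbourhood of the zero-gas steady state $(\,0,\bar p_0,\bar v_{G,0})$. Since $\|\tilde Y\|\le\delta_1$ keeps $\bar\alpha_G(L,t)$ small and $\bar E_G$ in (\ref{E bar}) is $O(\bar\alpha_G)$, a Gr\"onwall/bootstrap estimate along the characteristics keeps $\bar\alpha_G$ uniformly small on $\A(t_k)$; with $\bar\alpha_G$ small, the $x$-ODEs keep $\bar p$ close to $\bar p_0>0$ and $\bar v_G$ close to $\bar v_{G,0}$, which is bounded away from zero because $W^{L,inj}>0$ and $v_\infty>0$. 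This simultaneously ensures that $1/\bar v_G$ and $1/\bar p$ (which appear on the right-hand sides) are well defined and bounded, that all the algebraic relations of Section \ref{sec: simplified} stay smooth, and --- since the characteristic slopes $1/\bar v_G$ stay close to the constant steady-state slopes --- that distinct characteristics do not cross inside the bounded set $\A(t_k)$, so $\A(t_k)$ is genuinely a determinate set and $\tau(t_k;\cdot)$ is well defined and strictly monotone. The bound $\|\partial_t\tilde Y\|\le\delta_1'$ plays the analogous role one derivative up, controlling $\partial_t\bar\alpha_G$ on the incoming boundary so that the $x$-derivative prescribed by (\ref{alpha dynamics x-direction}) stays bounded.

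With these a priori bounds in hand, contraction and uniqueness follow along the lines of \cite[Theorem 5]{strecker2021quasilinear2x2} and \cite[Theorem 3.8]{bressan2000hyperbolic}: the map above sends a small Lipschitz ball into itself and is a contraction in a suitably weighted sup-norm, where an exponential weight in $x$ absorbs the Lipschitz constants of $\bar E_G$, of $\bar v_G$, and of the characteristic flow. The one ingredient not covered by those references --- and what I expect to be the main obstacle --- is the \emph{non-local} dependence of $\bar p(x,t)$ and $\bar v_G(x,t)$ on $\bar\alpha_G(\cdot,t)$ over $[x,L]$. I would handle this by observing that at each frozen $t$ the pair $(\bar p,\bar v_G)(\cdot,t)$ is the solution of a well-posed ODE system driven by $\bar\alpha_G(\cdot,t)$, hence depends on $\bar\alpha_G$ Lipschitz-continuously (with constant controlled by $L$ and the uniform bounds above) in the sup-norm over $[x,L]$, which is precisely what the contraction estimate requires; this is the point where the argument genuinely extends the cited results to the non-local setting.

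Finally, for the derivative bound I would differentiate the closed system (\ref{alpha dynamics x-direction})--(\ref{v x-direction}) formally with respect to $t$. This produces a \emph{linear} variational transport problem for $\partial_t X=(\partial_t\bar\alpha_G,\partial_t\bar p,\partial_t\bar v_G)$ along the same characteristics, with coefficients bounded in terms of $\delta_1$ and forced only through the boundary data $\partial_t Y$ at $x=L$. Integrating along the characteristics and applying Gr\"onwall's inequality (the characteristic transit length being at most $L$) yields $\operatorname*{ess\,sup}_{(x,t)\in\A(t_k)}\|\partial_t X(x,t)\|\le c_1\,\operatorname*{ess\,sup}_{t\in[t_k-T,t_k]}\|\partial_t Y(t)\|$ with $c_1$ depending only on $L$ and on the uniform bounds secured in the previous step, but not on $\|\partial_t Y\|$ itself; a standard difference-quotient argument promotes this from the formal computation to the actual Lipschitz solution (which is differentiable a.e.\ because $Y$ is Lipschitz).
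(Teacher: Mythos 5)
Your proposal is correct and follows essentially the same route as the paper's proof: transforming (\ref{alpha dynamics x-direction})--(\ref{v x-direction}) into integral equations along the characteristics, deriving a priori Gr\"onwall bounds near the zero-gas steady state that keep $1/\bar{p}$ and $1/\bar{v}_G$ bounded, and extracting the derivative estimate from the fact that the equations for $\partial_t \tilde{X}$ are forced only through $\partial_t \tilde{Y}$ at $x=L$. The only notable difference is cosmetic: you phrase existence/uniqueness as a contraction on a small Lipschitz ball and describe the variational system as linear, whereas the paper obtains uniqueness by subtracting two solutions and points out that the derivative equations are in fact super-linear (quadratic) in the derivatives --- which is precisely why the smallness of $\delta_1'$ is needed to rule out blow-up before the linear-in-$\|\partial_t Y\|$ bound can be read off.
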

\begin{proof}
The proof follows the proofs of \cite[Theorem 5]{strecker2021quasilinear2x2} and \cite[Theorem 3.8]{bressan2000hyperbolic}. 
 In order to define broad solutions (see \cite{strecker2021quasilinear2x2}[Theorem 3.8]), we can transform (\ref{alpha dynamics x-direction})-(\ref{v x-direction})  into integral equations by integrating (\ref{alpha dynamics x-direction}) along its characteristic lines. Then, by subtracting the steady state values $\bar{p}_0$ and $\bar{v}_{G,0}$, bounding the integrands by expressions that are locally Lipshitz in the state, exploiting that $\bar{p}$ in the denominator in (\ref{E bar}) is bounded from below by  $\bar{p}(x,t)\geq \bar{p}(L,t) \geq 1$\,bar (so that $\frac{1}{\bar{p}}$ remains bounded), and  using a  Gronwall-type inequality, an a-priori bound on $\|\tilde{X}(x,t)\|$ for $(x,t)\in\A(t_k)$ can be derived. Similarly, integral equations for $\frac{\partial}{\partial t} \tilde{X}(x,t)$ can be derived, the right-hand side of which are super-linear in $\tilde{X}(t)$ and $\frac{\partial}{\partial}\tilde{X}(t)$. Using techniques as in  \cite[Theorem 5]{strecker2021quasilinear2x2}, one can show that the solution of these integral equations do not blow up for all $(x,t)\in\A(t_k)$ if $\delta^{\prime}_1$ is sufficiently small (depending on the bound on $\|\tilde{X}(x,t)\|$ derived previously). Moreover, the integral equations for $\tilde{X}$ and $\frac{\partial}{\partial t}\tilde{X}$ depend  linearly on $\tilde{Y}(t)$ and $\frac{\partial}{\partial t}\tilde{Y}(t)$, respectively, which can be used to bound $\|\frac{\partial}{\partial t}X(x,t)\| $ via $\|\frac{\partial}{\partial t}Y(t)\|$.  
 
 Finally, uniqueness of the solution can be shown by subtracting (\ref{alpha dynamics x-direction})-(\ref{v x-direction}) for two  solutions  with the same topside measurements from each other. Clearly, the zero-solution solves the resulting set of equations, meaning that the two solutions are equal.
\end{proof}

\begin{remark}
In \cite{strecker2021quasilinear2x2}, rigorous expressions for the bounds equivalent to $\delta_1$ and $\delta^{\prime}_1$ in Lemma \ref{thm:1} are given for a related class of quasilinear hyperbolic systems. However, their derivations are extremely technical and are not repeated here. These bounds are based on worst-case growth estimates that are very conservative, meaning that at this stage they are unlikely to give a realistic  estimate that would be of practical value. Moreover, the state $X$ in (\ref{X(t)}) contains concentrations ($<1$) and pressures ($>10^5$\,Pa). This difference in scale would lead to even more conservatism in any bounds, although this could be addressed by rescaling the state. 
\end{remark}

The same techniques as in Lemma \ref{thm:1} can be used to show well-posedness of the second step in the observer evaluation.
\begin{lemma} \label{thm:2}
Assume $X(x,\tau(t_k;x))$ is Lipshitz-continuous in $x\in[0,L]$. There exist  constants $\delta_2>0$ and $\delta^{\prime}_2>0$ such that if $\sup_{x\in[0,L]}\|\tilde{X}(x,\tau(t_k;x)) \|\leq \delta_2$ and $\operatorname*{ess\,sup}_{x\in[0,L]}\|\frac{\partial}{\partial_t}\tilde{X}(x,\tau(t_k;x)) \|\leq \delta^{\prime}_2$, then the system (\ref{alpha dynamics step2})-(\ref{v step2}) has a unique solution on the domain $\B(t_k)$. 
\end{lemma}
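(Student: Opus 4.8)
The plan is to mirror the strategy already laid out for Lemma~\ref{thm:1}, since the second observer step solves essentially the same quasilinear transport dynamics, only now integrated \emph{forward} in time from the characteristic line $\tau(t_k;\cdot)$ rather than backward in the spatial variable. First I would recast the system (\ref{alpha dynamics step2})-(\ref{v step2}) in integral form: integrate (\ref{alpha dynamics step2}) along its characteristic lines emanating from the data curve $(x,\tau(t_k;x))$, so that $\bar\alpha_G(x,t)$ is expressed as the value of the initial data at the foot of the characteristic plus the time-integral of $\bar E_G$ along that characteristic; simultaneously substitute the algebraic closures for $\bar p$ and $\bar v_G$, which are explicit integrals over $x$ of the Lipschitz-continuous functions $\bar G+\bar F$ and $\partial_x\bar v_G$, together with the implicit bottom boundary condition $\bar\alpha_G(0,t)=\bar\alpha_G^{\text{inflow}}(\bar p(0,t))$. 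This yields a closed fixed-point equation for $\bar\alpha_G$ (equivalently for $\tilde X$) on $\B(t_k)$.

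Next I would subtract the zero-gas steady state $(\bar p_0,\bar v_{G,0})$ to work with $\tilde X$, and bound all integrands by expressions that are locally Lipschitz in $\tilde X$; the key structural facts to invoke are exactly those used in Lemma~\ref{thm:1}, namely that $\bar p$ stays bounded below by $1$\,bar so $1/\bar p$ in $\bar E_G$ stays bounded, and that $\bar v_G$ stays bounded away from zero (gas always rises) so the characteristics of (\ref{alpha dynamics step2}) are well-defined and non-degenerate over the finite domain $\B(t_k)$. A Gronwall-type argument over the finitely long time interval $[\tau(t_k;x),t_k]$ then gives an a-priori bound on $\sup_{\B(t_k)}\|\tilde X\|$ in terms of $\delta_2$. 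I would then differentiate the integral equations in $t$ to obtain a companion fixed-point system for $\partial_t\tilde X$ whose right-hand side is super-linear in $(\tilde X,\partial_t\tilde X)$, and show — choosing $\delta_2'$ small enough relative to the bound on $\|\tilde X\|$ just obtained, exactly as in \cite[Theorem~5]{strecker2021quasilinear2x2} — that the solution does not blow up on $\B(t_k)$. Uniqueness follows as before: subtract the integral equations for two solutions with the same data on $\tau(t_k;\cdot)$, obtain a homogeneous Gronwall inequality, and conclude the difference vanishes.

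The one genuinely new wrinkle, and the step I expect to require the most care, is the bottom boundary condition (\ref{inflow simplified 1})-(\ref{inflow simplified 2}) that defines $\bar\alpha_G^{\text{inflow}}(\cdot)$ implicitly: one must verify this map is well-defined and Lipschitz in the bottom pressure on the relevant range, including across the kink of the $\max(0,\cdot)$ term, so that the characteristic feet landing on $x=0$ feed back into the fixed-point contraction without destroying the Lipschitz estimates. I would handle this by checking the implicit function conditions for the $2\times2$ algebraic system in $(\bar\alpha_G(0,t),\bar v_G(0,t))$, noting that $\max(0,\cdot)$ is globally Lipschitz with constant $1$ so the composite $\bar\alpha_G^{\text{inflow}}$ inherits Lipschitz continuity (with a one-sided kink that is harmless for the contraction argument). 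Beyond that, because $\B(t_k)$ is compact and the only nonlinearity that was delicate — the $1/\bar p$ factor — is controlled exactly as in Lemma~\ref{thm:1}, the remainder is a routine repetition of those estimates, which I would invoke rather than reproduce.
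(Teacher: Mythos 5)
Your proposal is correct and matches the paper's approach: the paper gives no separate proof for this lemma, stating only that ``the same techniques as in Lemma \ref{thm:1}'' apply, which is precisely the strategy you carry out (integral/characteristic reformulation, Gronwall bounds on $\tilde{X}$ and $\partial_t\tilde{X}$, uniqueness by subtraction). Your additional attention to the implicitly defined boundary map $\bar{\alpha}_G^{\text{inflow}}$ at $x=0$ correctly identifies the one element genuinely new to step 2, and your Lipschitz/implicit-function treatment of it is sound.
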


Lemmas \ref{thm:1} and \ref{thm:2} form the basis for showing well-posedness and convergence of the observer defined in Algorithm \ref{estimation algorithm}.
\begin{theorem}\label{thm:3}
Assume the measurements $Y(t)$  is Lipschitz-continuous for all $t$.  There exist $K\in\N$ and  constants $\delta_3>0$ and $\delta^{\prime}_3>0$ such that if $\|\tilde{Y}(t) \|\leq \delta_3$ and $\|\frac{\partial}{\partial_t}\tilde{Y}(t) \|\leq \delta^{\prime}_3$ for all $t\in[0,\infty)$, then the state estimate obtained by Algorithm \ref{estimation algorithm} is equal to the actual state at all times $t_k$ with $k\geq K$.
\end{theorem}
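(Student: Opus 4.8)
The plan is to bootstrap from Lemmas \ref{thm:1} and \ref{thm:2}, which already handle the two steps of Algorithm \ref{estimation algorithm} at a fixed $t_k$, and to convert the two smallness conditions on $\tilde{Y}$ into one uniform-in-time condition plus a ``warm-up'' index $K$. First I would address the index $K$: the observer at step $k$ only returns a valid estimate once the characteristic line through the latest measurement has reached the bottom boundary, i.e. once $\tau(t_k;0)\ge 0$, and once the measurement horizon is long enough that $\tau(t_k;0)\ge t_k-T$. I would argue that, under the smallness assumptions on $\tilde Y$, the velocity $\bar v_G$ along the relevant characteristics is bounded below by a positive constant (this follows from the a-priori bounds in the proof of Lemma \ref{thm:1} together with $\bar v_G(L,t)$ being close to the nonzero steady-state value $\bar v_{G,0}(L)$), so the travel time $t_k-\tau(t_k;0)=\int_0^L \bar v_G^{-1}\,d\xi$ is bounded above by some fixed $T^\ast$. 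Choosing $K$ so that $t_K\ge T^\ast$ and taking the observation horizon $T\ge T^\ast$ then guarantees that for every $k\ge K$ the set $\A(t_k)$ in (\ref{A cal}) is well-defined and reaches $x=0$, so that Algorithm \ref{estimation algorithm} is applicable.

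Next I would chain the two lemmas. Set $\delta_3=\min\{\delta_1,\delta_3'\}$-type quantities chosen so that the hypothesis of Lemma \ref{thm:1} holds at every $t_k$ with $k\ge K$; this gives a unique Lipschitz solution of (\ref{alpha dynamics x-direction})--(\ref{v x-direction}) on $\A(t_k)$, and in particular reconstructs $X(x,\tau(t_k;x))$ for all $x\in[0,L]$, together with the bound $\operatorname*{ess\,sup}_{\A(t_k)}\|\partial_t X\|\le c_1\operatorname*{ess\,sup}_{[t_k-T,t_k]}\|\partial_t Y\|$. The key linking step is to feed this output into Lemma \ref{thm:2}: I need $\sup_x\|\tilde X(x,\tau(t_k;x))\|\le\delta_2$ and the analogous derivative bound. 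The sup bound on $\|\tilde X\|$ along the characteristic line comes from the a-priori Gronwall estimate inside Lemma \ref{thm:1} (it bounds $\|\tilde X\|$ on all of $\A(t_k)$, hence on the line), and is therefore controlled by $\delta_3$; the derivative bound follows from the displayed $c_1$-inequality, controlled by $\delta_3'$. Shrinking $\delta_3,\delta_3'$ if necessary so that these propagated bounds fall below $\delta_2,\delta_2'$, Lemma \ref{thm:2} yields a unique solution on $\B(t_k)$, whose restriction to $t=t_k$ is the estimate of $\bar\alpha_G(\cdot,t_k)$. Since (\ref{alpha dynamics x-direction})--(\ref{v x-direction}) and (\ref{alpha dynamics step2})--(\ref{v step2}) are exactly the equations (\ref{alpha dynamics simplified})--(\ref{inflow simplified 2}) re-solved in a different direction, and since on $\A(t_k)\cup\B(t_k)$ these have unique solutions, the reconstructed trajectory must coincide with the true plant trajectory on that determinate region; exactness of the estimate at $t_k$ then follows. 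The remark after Lemma \ref{thm:1} is invoked to justify not writing the constants explicitly.

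The main obstacle I anticipate is the derivative-regularity bookkeeping in the chaining step. Lemma \ref{thm:2} requires a bound on $\partial_t\tilde X$ \emph{along the characteristic curve} $x\mapsto\tau(t_k;x)$, whereas Lemma \ref{thm:1} delivers an essential-sup bound on $\partial_t X$ over the two-dimensional set $\A(t_k)$; one must check that the restriction to the curve is legitimate (the curve has measure zero in $\A(t_k)$, so a genuine trace argument using Lipschitz continuity of the solution, not merely the $\operatorname*{ess\,sup}$, is needed — this is why Lemma \ref{thm:1} is stated with a \emph{Lipschitz-continuous} solution). A secondary subtlety is making the smallness thresholds consistent: the $\delta$'s in Lemma \ref{thm:2} depend on the a-priori bound produced in Lemma \ref{thm:1}, which in turn depends on $\delta_1,\delta_1'$, so one has to close this loop by choosing the $\delta_3$'s last; I would make this dependency explicit in the ordering of the constant selection. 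Finally, I would note that uniformity in $k$ is free once the bounds on $\tilde Y$ are assumed uniform on $[0,\infty)$ and $T$ is fixed, because all estimates in Lemmas \ref{thm:1}--\ref{thm:2} depend on the data only through these uniform quantities and through the fixed geometry of $\A(t_k),\B(t_k)$, which are translates of a bounded reference set.
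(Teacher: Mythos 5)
Your proposal follows essentially the same route as the paper's proof: choose $K$ so that $\tau(t_K;0)\geq 0$, use uniqueness from Lemma \ref{thm:1} to conclude the reconstructed and actual states coincide on $\A(t_k)$ (hence on the characteristic line), propagate the smallness of $\tilde{Y}$ and $\partial_t\tilde{Y}$ into the hypotheses of Lemma \ref{thm:2} via the $c_1$-bound by shrinking $\delta_3,\delta_3'$, and conclude exactness on $\B(t_k)$ and thus at $t=t_k$. Your additional remarks on the trace of $\partial_t X$ along the characteristic curve and the ordering of constant selection are sensible refinements of points the paper leaves implicit, but they do not change the argument.
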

\begin{proof}
One can choose $K$ large enough such that $\tau(t_K;0) > 0$. Since both the actual and the estimated trajectory satisfy (\ref{alpha dynamics x-direction})-(\ref{v x-direction}) and are equal to $Y$ at $x=L$, uniqueness of the solution on $\A(t_k)$ for $k\geq K$ as guaranteed  by Lemma \ref{thm:1}, implies that the estimated state is equal to the actual on all of $\A(t_k)$, including on the characteristic line $(x,\tau(t_k;x)$, $x\in[0,L]$. Similarly, since the observer equations (\ref{alpha dynamics step2})-(\ref{v step2}) are just a copy of the set of equations that the actual dynamics satisfy, Lemma \ref{thm:2} implies  that the estimated and actual state exist and are equal on $\B(t_k)$ if $\|\tilde{X}(x,t)\|$ and $\|\frac{\partial}{\partial t}\|\tilde{X}(x,t)\|$ are sufficiently small on the line $(x,\tau(t_k;x)$, $x\in[0,L]$. By the last statement in  Lemma \ref{thm:1}, the latter can be ensured by choosing $\delta_3$ and $\delta^{\prime}_3$ sufficiently small.  Since $\B(t_k)$ includes the line $(x,t_k)$, $x\in[0,L]$, this implies that the estimate of $X(\cdot,t_k)$ is equal to the actual value.
\end{proof}
We next formulate a lemma regarding well-posedness of each feedback control step as given by Algorithm \ref{control algorithm}.
\begin{lemma}\label{thm:4}
Fix $k\in\N$ and assume the state at time $t_k$ is fully known, i.e., (\ref{target IC}) is satisfied. There exist $\delta_4>0$ and $\delta^{\prime}_4>0$ and $\bar{\delta}>0$ such that if $\|\tilde{X}(\cdot,t_k)\|_{\infty} \leq \delta_4$, $\|\frac{\partial}{\partial t}\tilde{X}(\cdot,t_k)\|_{\infty} \leq \delta_4^{\prime}$ and $\bar{p}^{\prime}\leq \bar{\delta}$, then  (\ref{target alpha})-(\ref{target IC}) has a unique solution on $(x,t)\in[0,L]\times[t_k,t_{k+1}]$. Moreover, the actual system (\ref{alpha dynamics simplified})-(\ref{inflow simplified 2}) in closed loop with $p^{\text{top}}$ as constructed by Algorithm \ref{control algorithm} satisfies $\bar{p}(0,t) = p_{\text{ref}}^{*,k}(t)$ for all $t\in[t_k,t_{k+1}]$.
\end{lemma}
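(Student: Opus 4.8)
The plan is to establish Lemma \ref{thm:4} in two stages, paralleling exactly the two-step structure of Algorithm \ref{control algorithm}: first, well-posedness of the target system (\ref{target alpha})-(\ref{target IC}); second, the claim that the plant in closed loop reproduces the target trajectory, so that in particular $\bar{p}(0,t)=p_{\text{ref}}^{*,k}(t)$. For the first stage I would argue along the same lines as Lemmas \ref{thm:1} and \ref{thm:2}: transform (\ref{target alpha}) into an integral equation by integrating along its characteristics, substitute the algebraic expressions (\ref{target p})-(\ref{target v}) for $\bar{p}^{*,k}$ and $\bar{v}_G^{*,k}$ (which are integrated \emph{upwards} from $x=0$ here, rather than downwards from $x=L$), and subtract the steady-state profiles $\bar{p}_0,\bar{v}_{G,0}$ so that the unknowns are the deviation variables. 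The key structural facts are unchanged from the earlier lemmas: $\bar{p}^{*,k}$ stays bounded below (here by $p_{\text{ref}}^{*,k}(t)$ minus the hydrostatic/friction drop, which is positive provided $\delta_4$ is small so the well stays near the gas-free profile), so $1/\bar{p}^{*,k}$ in $\bar{E}_G^{*,k}$ is controlled; and the right-hand sides are locally Lipschitz in the deviation state. A Gronwall argument over the short horizon $[t_k,t_{k+1}]$ of length $\theta$ then gives an a priori bound on $\|\tilde{X}^{*,k}\|$, and the analogous integral equation for $\partial_t\tilde{X}^{*,k}$ gives a bound on the time derivative provided $\delta_4^{\prime}$ and $\bar{p}^{\prime}$ are small enough that characteristics do not collide. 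The role of $\bar{p}^{\prime}\leq\bar\delta$ is precisely to keep $\partial_t p_{\text{ref}}^{*,k}$ — and hence, through (\ref{target BC 1})-(\ref{target BC 2}) and (\ref{target p}), the time-variation forced on the whole target profile — small enough to preclude shock formation; this is where I would spend the most care, showing that the characteristic speeds $\bar v_G^{*,k}$ stay bounded away from each other on the strip.

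The new wrinkle compared to Lemmas \ref{thm:1}-\ref{thm:2} is the implicitly defined inflow boundary condition (\ref{target BC 1})-(\ref{target BC 2}), which couples $\bar\alpha_G^{*,k}(0,t)$ and $\bar v_G^{*,k}(0,t)$ to the as-yet-unknown $\bar p^{*,k}(0,t)=p_{\text{ref}}^{*,k}(t)$ — but here $p_{\text{ref}}^{*,k}$ is prescribed by (\ref{pref*}), so (\ref{target BC 1})-(\ref{target BC 2}) become two algebraic equations in the two unknowns $\bar\alpha_G^{*,k}(0,t),\bar v_G^{*,k}(0,t)$ at each fixed $t$. I would invoke the implicit function theorem: near the gas-free equilibrium the Jacobian of the map $(\bar\alpha_G,\bar v_G)\maps$ (left-hand sides of (\ref{target BC 1})-(\ref{target BC 2})) is nonsingular (this is essentially the statement that $\bar\alpha_G^{\text{inflow}}$ in Section \ref{sec: simplified} is well-defined), so the boundary data are a Lipschitz function of $p_{\text{ref}}^{*,k}(t)$, and (\ref{pref*}) makes $p_{\text{ref}}^{*,k}$ itself Lipschitz in $t$ with the small slope $\bar p^{\prime}$. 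This closes the fixed-point argument and yields existence and uniqueness of a Lipschitz solution on $[0,L]\times[t_k,t_{k+1}]$; uniqueness follows, as in Lemma \ref{thm:1}, by subtracting two solutions and observing the zero difference solves the resulting homogeneous integral system.

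For the second stage — that the closed-loop plant equals the target — the argument is short and is the same device used in Theorem \ref{thm:3}: both the plant (\ref{alpha dynamics simplified})-(\ref{inflow simplified 2}) and the target (\ref{target alpha})-(\ref{target IC}) satisfy the \emph{same} transport equation and the \emph{same} bottom inflow relations (\ref{inflow simplified 1})-(\ref{inflow simplified 2}) vs. (\ref{target BC 1})-(\ref{target BC 2}); they share the initial condition (\ref{target IC}) at $t=t_k$; and the choice $p^{\text{top}}(t)=\bar p^{*,k}(L,t)$ in (\ref{p_top closed loop}) forces the plant's topside pressure to match the target's. The hyperbolic structure with gas rising upward means the state on $[0,L]\times[t_k,t_{k+1}]$ is determined by the data on the initial line $\{t=t_k\}$ together with the bottom boundary $\{x=0\}$ — the topside pressure is a \emph{consequence}, not an input, for the target system — so uniqueness of the solution to this initial-boundary-value problem (established in stage one) forces plant and target to coincide on the whole rectangle. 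In particular $\bar p(0,t)=\bar p^{*,k}(0,t)=p_{\text{ref}}^{*,k}(t)$. The main obstacle, as noted, is the shock-avoidance estimate controlling $\partial_t\tilde X^{*,k}$ through the smallness of $\bar p^{\prime}$; the implicit-boundary-condition step is routine once the equilibrium Jacobian is checked, and the closed-loop matching step is essentially a uniqueness statement and carries no analytic difficulty beyond what stage one already supplies.
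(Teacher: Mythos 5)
Your proposal is correct and follows essentially the same route as the paper's proof: well-posedness of the target system by the integral-equation/Gronwall machinery of Lemma \ref{thm:1} (using that (\ref{target alpha})--(\ref{target IC}) is just a reformulation of the actual dynamics), and the tracking claim $\bar{p}(0,t)=p_{\text{ref}}^{*,k}(t)$ by identifying the closed-loop plant trajectory with the target trajectory through uniqueness, since setting $p^{\text{top}}=\bar{p}^{*,k}(L,\cdot)$ makes the two solve the same initial-boundary-value problem. Your added detail on the implicit inflow boundary condition and on the role of $\bar{p}^{\prime}$ in preventing characteristic collisions is consistent with, and fills in, what the paper leaves implicit.
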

\begin{proof}
Existence and uniqueness of the solution on $(x,t)\in[0,L]\times[t_k,t_{k+1}]$ can be proven using the same techniques as in Lemma \ref{thm:1}, where we again use that (\ref{target alpha})-(\ref{target IC}) is just a reformulated version of the actual dynamics. In particular, uniqueness of the solution includes that $\bar{p}(x,t)  = \bar{p}^{*,k}(x,t)$ on $(x,t)\in[0,L]\times[t_k,t_{k+1}]$. That is, $\bar{p}(0,t) = p_{\text{ref}}^{*,k}(t)$ for all $t\in[t_k,t_{k+1}]$ if and only if $\bar{p}(L,t) = p^{\text{top}}(t) =  \bar{p}^{*,k}(L,t)$ for all $t\in[t_k,t_{k+1}]$.
\end{proof}
We are now in position to prove the main theorem on well-posedness and convergence of the closed loop system.
\begin{theorem}\label{thm:5}
Assume the feedback controller is activated at some time $T$ with $\tau(T;0)\geq 0$. There exist  $\delta_5>0$,  $\delta_5^{\prime}>0$, $\tilde{\delta}_5>0$,  $\tilde{\delta}_5^{\prime}>0$, $\bar{\delta}>0$ and $T^{\prime}>T$ such that if the initial conditions and $p^{\text{top}}(t)$ for $t\leq T$ are Lipschitz continuous, compatible and such that the solution exists up to time $T$ with  $\|\tilde{Y}(t) \|\leq \tilde{\delta}_5$ and $\|\frac{\partial}{\partial_t}\tilde{Y}(t) \|\leq \tilde{\delta}^{\prime}_t$ for all $t\leq T$, and such that  $\|\tilde{X}(\cdot,T)\|_{\infty} \leq \delta_5$ and $\|\frac{\delta}{\delta t}\tilde{X}(\cdot,T)\|_{\infty} \leq \delta_5^{\prime}$, then the closed-loop system consisting of the simplified drift flux model (\ref{alpha dynamics simplified})-(\ref{inflow simplified 2}), the observer in Algorithm \ref{estimation algorithm} and the feedback control law in Algorithm \ref{control algorithm} with $\bar{p}^{\prime}\leq \bar{\delta}$ has a unique solution on $[0,L]\times[0,\infty)$ that satisfies $\bar{p}(0,t)=p_{\text{ref}}$ for all $t\geq T^{\prime}$.
\end{theorem}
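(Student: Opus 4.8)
The plan is to build the closed-loop solution piecewise on the sampling intervals $[t_k,t_{k+1}]$ with $t_k\ge T$, concatenating the per-step guarantees of Theorem \ref{thm:3} (exact observer) and Lemma \ref{thm:4} (well-posed control step with $\bar{p}(0,t)=p^{*,k}_{\text{ref}}(t)$), while carrying along an induction that keeps the state inside the smallness balls those results require, and then to extract the convergence $\bar{p}(0,t)\to p_{\text{ref}}$ directly from the explicit design (\ref{pref*}). On $[0,T]$ the solution exists, is unique and Lipschitz by hypothesis. First I would note that, since $\tau(T;0)\ge 0$ and the plant here coincides with the model used for observer design, the smallness hypotheses on $\tilde Y$ make Theorem \ref{thm:3} applicable, so at every sampling instant $t_k\ge T$ the estimate from Algorithm \ref{estimation algorithm} equals the true state $X(\cdot,t_k)$; hence (\ref{target IC}) holds with the \emph{actual} state and Lemma \ref{thm:4} can be invoked at step $k$.

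The induction would be over $k$ with $t_k\ge T$, with inductive hypothesis $\|\tilde X(\cdot,t_k)\|_\infty\le\delta_5$ and $\|\frac{\partial}{\partial t}\tilde X(\cdot,t_k)\|_\infty\le\delta_5^{\prime}$, where $\delta_5,\delta_5^{\prime}$ are fixed small enough to imply the thresholds $\delta_3,\delta_3^{\prime}$ of Theorem \ref{thm:3} (via $\tilde Y=\tilde X(L,\cdot)$) and $\delta_4,\delta_4^{\prime}$ of Lemma \ref{thm:4}; the base case $t_k=T$ is the hypothesis of the theorem. Given the hypothesis at $t_k$, Lemma \ref{thm:4} (with $\bar{p}^{\prime}\le\bar{\delta}$) gives a unique solution of (\ref{target alpha})-(\ref{target IC}) on $[0,L]\times[t_k,t_{k+1}]$ coinciding with the closed-loop solution, in particular $\bar{p}(0,t)=p^{*,k}_{\text{ref}}(t)$ there; a propagation estimate (the hard part, below) would re-establish the hypothesis at $t_{k+1}$. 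Concatenating these gives existence and uniqueness on $[0,L]\times[0,\infty)$; continuity across sampling instants holds because the datum (\ref{target IC}) is the endpoint of the previous step and because $p^{*,k}_{\text{ref}}(t_k)=\bar{p}(0,t_k)$ keeps $\bar{p}$, $\bar{v}_G$ and the characteristic speeds continuous at $t_k$.

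For convergence I would set $e^k=p_{\text{ref}}-\bar{p}(0,t_k)$ and use $\bar{p}(0,t_{k+1})=p^{*,k}_{\text{ref}}(t_{k+1})$ from Lemma \ref{thm:4} together with (\ref{pref*}) to obtain the recursion $|e^{k+1}|=\max\{0,\,|e^k|-\bar{p}^{\prime}\theta\}$, noting that $e^k=0$ forces $p^{*,k}_{\text{ref}}\equiv p_{\text{ref}}$ and hence $e^{k+1}=0$. Since the pressure component of $\tilde X$ is $\bar{p}-\bar{p}_0$ with $\bar{p}_0(0)=p_{\text{ref}}$, we have $|e^K|\le\|\tilde X(\cdot,t_K)\|_\infty\le\delta_5$, so the error vanishes after $N\le\lceil\delta_5/(\bar{p}^{\prime}\theta)\rceil$ steps, and with $T^{\prime}=t_{K+N}$ Lemma \ref{thm:4} then gives $\bar{p}(0,t)=p^{*,k}_{\text{ref}}(t)=p_{\text{ref}}$ for all $t\ge T^{\prime}$.

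The hard part is the propagation estimate. For the finitely many steps $t_k\in[T,t_{K+N})$ I would run a Gronwall-type bound on the target integral equations — exactly as in the proof of Lemma \ref{thm:1}, now on the bounded strip $[0,L]\times[t_k,t_{k+1}]$ and again using that $\bar{p}\ge\bar{p}(L,t)\ge1$\,bar keeps $1/\bar{p}$ bounded — exploiting that $p^{*,k}_{\text{ref}}$ stays within $\delta_5$ of $p_{\text{ref}}$ with derivative at most $\bar{p}^{\prime}$; this dominates $\|\tilde X(\cdot,t_{k+1})\|_\infty$ and its time derivative by $(1+C\theta)$ times their values at $t_k$ plus an $O(\bar{p}^{\prime})$ term, so choosing $\delta_5,\delta_5^{\prime},\bar{\delta}$ small relative to $(1+C\theta)^{N}$ keeps the iterates in the balls up to $t_{K+N}$. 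For $k\ge K+N$ one has $p^{*,k}_{\text{ref}}\equiv p_{\text{ref}}$, so the closed loop is just the transport equation (\ref{alpha dynamics simplified}) with the time-invariant inflow condition (\ref{inflow simplified 1})-(\ref{inflow simplified 2}) at $\bar{p}(0,t)=p_{\text{ref}}$; since each characteristic leaves $[0,L]$ within the transport time $\int_0^L\bar{v}_G^{-1}\,d\xi$, after that time the state is frozen at the corresponding equilibrium, whose deviation $\tilde X$ is controlled by $k_G(p_{\text{res}}-p_{\text{ref}})$ and so stays $\le\delta_5$ provided the drawdown is small enough — a restriction already implicit in the $\delta$-thresholds of Lemmas \ref{thm:1}-\ref{thm:4}. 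As the Remark stresses, making these worst-case constants explicit is technical and conservative; here I only need them to exist.
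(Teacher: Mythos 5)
Your overall architecture is the same as the paper's: observer exactness from Theorem \ref{thm:3}, per-step exact tracking $\bar{p}(0,t)=p_{\text{ref}}^{*,k}(t)$ from Lemma \ref{thm:4}, the explicit recursion induced by (\ref{pref*}) to get finite-time convergence of the bottom-hole pressure, continuity at the sampling instants via $p_{\text{ref}}^{*,k}(t_k)=\bar{p}(0,t_k)$, and an a-priori smallness argument that recursively re-validates the hypotheses of Theorem \ref{thm:3} and Lemma \ref{thm:4}. The one place you genuinely diverge is the propagation estimate. The paper avoids iterating bounds in time altogether: since the closed loop enforces the bottom boundary value $\bar{p}(0,t)=p_{\text{ref}}^{*,k}(t)$, which by design stays within $O(\delta_5)$ of $p_{\text{ref}}$ with time derivative $O(\bar{p}^{\prime})$, it solves the dynamics in the positive $x$-direction from $x=0$ (as in Lemma \ref{thm:1} but with the roles of the boundaries swapped) and gets a bound on $\|\tilde{X}(x,t)\|$ and $\|\frac{\partial}{\partial t}\tilde{X}(x,t)\|$ that is uniform in $t\geq T$, with a Gronwall factor depending only on $L$, not on the number of sampling steps. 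Your version instead compounds a factor $(1+C\theta)$ per step over the $N$ transient steps and then switches to a characteristics-exit argument once $p_{\text{ref}}^{*,k}\equiv p_{\text{ref}}$. That works, but note the latent circularity you leave unresolved: $N\approx |e^K|/(\bar{p}^{\prime}\theta)$, so $(1+C\theta)^{N}\approx e^{C|e^K|/\bar{p}^{\prime}}$, and since decreasing $\bar{\delta}$ (hence $\bar{p}^{\prime}$) \emph{increases} $N$, you cannot choose $\delta_5$ and $\bar{\delta}$ independently "small relative to $(1+C\theta)^N$"; you must fix the ratio, e.g.\ $\delta_5\lesssim \bar{p}^{\prime}$, to keep the accumulated growth bounded. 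This is fixable in one line but should be stated; the paper's spatial-direction bound sidesteps the issue entirely, which is the main thing its route buys over yours.
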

\begin{proof}
Since $\tau(T;0)\geq 0$ by assumption, Theorem \ref{thm:3} states that the observer has converged by the time the feedback controller is activated. Here, it is assumed that the initial conditions and $p_{\text{top}}(t)$ for $t\leq T$ are benign such that the system is actually observable and controllable by the time the controller is activated.  For all $k$ with $t_k\geq T$, by Lemma \ref{thm:4} the solution satisfies $\bar{p}(0,t) = p_{\text{ref}}^{*,k}(t)$ for $t\in[t_k,t_{k+1}]$. By recursively  using the design in Equation (\ref{pref*}), this means that $\bar{p}(0,t) = p_{\text{ref}}^{*,k}(t) = p_{\text{ref}}$ for all $t\geq T^{\prime} = \frac{|\bar{p}(0,T)-p_{\text{ref}}|}{\bar{p}^{\prime}}$. With regards to well-posedness, the design in (\ref{pref*}) and the assumption that $\|\tilde{X}(\cdot,T)\|_{\infty} \leq \delta_5$, ensures that $\bar{p}(0,t) = p_{\text{ref}}^{*,k}$ remains below a bound that can be made arbitrarily small by making $\delta_5$ small. The norm of the time derivative at $x=0$, $\|\frac{\partial}{\partial t} \tilde{X}(0,t)\|$ for $t\geq T$ can be made arbitrarily small via $\bar{p}^{\prime}$. Then, similar as in Lemma \ref{thm:1}, by solving the dynamics in the positive $x$-direction with the ``initial'' condition at $x=0$, one can show that this implies that $\|\tilde{X}(x,t)\|$ and $\|\frac{\partial}{\partial t} \tilde{X}(x,t)\|$ remain sufficiently small for all $x\in[0,L]$, $t\geq T$. That is, the solution cannot blow up in finite time, and the assumptions of Theorem \ref{thm:3} (smallness of  $\|\tilde{Y}(t) \|$ and $\|\frac{\partial}{\partial_t}\tilde{Y}(t) \|$) and Lemma \ref{thm:4} (smallness of $\|\tilde{X}(\cdot,t_k)\|_{\infty}$ and $\|\frac{\partial}{\partial t}\tilde{X}(\cdot,t_k)\|_{\infty}$) are recursively satisfied. Moreover, the design (\ref{pref*})  is such that $p_{\text{ref}}^{*,k}(t_k)=\bar{p}(0,t_k)$ which, due to (\ref{pbar start L})/(\ref{pbar start 0}) and (\ref{target p}), implies that $p_{\text{top}}(t)$ stays  continuous at $t=t_k$ for all $k$ so that the whole solution remains Lipschitz-continuous.
\end{proof}
\begin{remark}
In \cite{strecker2021quasilinear2x2}, rigorous, although quite conservative certificates for robustness with respect to uncertainty in parameters and measurement and actuation inaccuracies are given for a related class of quasilinear hyperbolic systems. Deriving similar conditions for the system considered here would go beyond the scope of this paper. However, the numerical simulations in Section \ref{sec:MonteCarlo} suggest that there is some inherent robustness with respect to such uncertainties, as well as with respect to mismatch between the full drift-flux model and the siomplified model used for control design. 

The sampling period $\theta$ does not appear in Theorem \ref{thm:5} because in the appendix, exact model knowledge and predictability are assumed. In presence of model uncertainty, the sensitivity of closed-loop stability with respect to $\theta$ is also investigated in \cite{strecker2021quasilinear2x2}. In particular, long $\theta$ can reduce the robustness with respect to model uncertainty due to prediction errors, while very short $\theta$ can also be detrimental because new measurement errors are introduced at every sampling event. The latter can be  managed by introducing  a minimum dwell time (see also the classical reference \cite{hespanha1999dwelltime}).
\end{remark}

}

\bibliographystyle{IEEEtran}
\bibliography{IEEEabrv,references} 

\begin{thebibliography}{10}
\providecommand{\url}[1]{#1}
\csname url@samestyle\endcsname
\providecommand{\newblock}{\relax}
\providecommand{\bibinfo}[2]{#2}
\providecommand{\BIBentrySTDinterwordspacing}{\spaceskip=0pt\relax}
\providecommand{\BIBentryALTinterwordstretchfactor}{4}
\providecommand{\BIBentryALTinterwordspacing}{\spaceskip=\fontdimen2\font plus
\BIBentryALTinterwordstretchfactor\fontdimen3\font minus
  \fontdimen4\font\relax}
\providecommand{\BIBforeignlanguage}[2]{{%
\expandafter\ifx\csname l@#1\endcsname\relax
\typeout{** WARNING: IEEEtran.bst: No hyphenation pattern has been}%
\typeout{** loaded for the language `#1'. Using the pattern for}%
\typeout{** the default language instead.}%
\else
\language=\csname l@#1\endcsname
\fi
#2}}
\providecommand{\BIBdecl}{\relax}
\BIBdecl

\bibitem{Godhavn2010}
J.-M. Godhavn, ``{Control Requirements for Automatic Managed Pressure Drilling
  System},'' \emph{SPE Drilling {\&} Completion}, vol.~25, no.~3, pp. 336--345,
  apr 2010.

\bibitem{Godhavn2011}
J.~M. Godhavn, A.~Pavlov, G.~O. Kaasa, and N.~L. Rolland, ``{Drilling seeking
  automatic control solutions},'' in \emph{IFAC Proceedings Volumes
  (IFAC-PapersOnline)}, B.~Sergio, Ed., vol.~18, no. PART 1, Milano, Italy, aug
  2011, pp. 10\,842--10\,850.

\bibitem{Bennion1998a}
D.~B. Bennion, F.~B. Thomas, R.~F. Bietz, and D.~W. Bennion, ``{Underbalanced
  Drilling, Praises and Perils},'' in \emph{Permian Basin Oil and Gas Recovery
  Conference}, no. December.\hskip 1em plus 0.5em minus 0.4em\relax Society of
  Petroleum Engineers, apr 1996.

\bibitem{Graham2004}
R.~A. Graham and M.~S. Culen, ``{Methodology For Manipulation Of Wellhead
  Pressure Control For The Purpose Of Recovering Gas To Process In
  Underbalanced Drilling Applications},'' in \emph{Proceedings of SPE/IADC
  Underbalanced Technology Conference and Exhibition}.\hskip 1em plus 0.5em
  minus 0.4em\relax Houston, Texas: Society of Petroleum Engineers, oct 2004.

\bibitem{Aarsnes2014}
U.~J.~F. Aarsnes, F.~{Di Meglio}, O.~M. Aamo, and G.-O. Kaasa,
  ``{Fit-for-Purpose Modeling for Automation of Underbalanced Drilling
  Operations},'' in \emph{SPE/IADC Managed Pressure Drilling {\&} Underbalanced
  Operations Conference {\&} Exhibition}.\hskip 1em plus 0.5em minus
  0.4em\relax Madrid, Spain: Society of Petroleum Engineers, apr 2014, pp.
  SPE--168\,955--MS.

\bibitem{Mykytiw2003a}
C.~Mykytiw, I.~Davidson, and P.~Frink, ``{Design and Operational Considerations
  to Maintain Underbalanced Conditions with Concentric Casing Injection},'' in
  \emph{IADC/SPE Underbalanced Technology Conference and Exhibition}.\hskip 1em
  plus 0.5em minus 0.4em\relax Society of Petroleum Engineers, apr 2003.

\bibitem{Mykytiw2004}
C.~Mykytiw, P.~Suryanarayana, and P.~Brand, ``{Practical Use of a Multiphase
  Flow Simulator for Underbalanced Drilling Applications Design - The Tricks of
  the Trade},'' in \emph{SPE/IADC Underbalanced Technology Conference and
  Exhibition}.\hskip 1em plus 0.5em minus 0.4em\relax Society of Petroleum
  Engineers, apr 2004.

\bibitem{AarsnesThesis}
U.~J.~F. Aarsnes, ``{Modeling of Two-Phase Flow for Estimation and Control of
  Drilling Operations},'' Ph.D. dissertation, Norwegian University of Science
  and Technology, 2016.

\bibitem{aarsnes2016methodology}
U.~J.~F. Aarsnes, F.~Di~Meglio, R.~Graham, and O.~M. Aamo, ``A methodology for
  classifying operating regimes in underbalanced-drilling operations,''
  \emph{SPE Journal}, vol.~21, no.~02, pp. 423--433, 2016.

\bibitem{Pedersen2013}
T.~Pedersen and J.-M. Godhavn, ``{Model Predictive Control of Flow and Pressure
  in Underbalanced Drilling},'' in \emph{10th IFAC International Symposium on
  Dynamics and Control of Process Systems (2013)}, 2013, pp. 307--312.

\bibitem{Pedersen2018}
T.~Pedersen, U.~J.~F. Aarsnes, and J.-m. Godhavn, ``{Flow and pressure control
  of underbalanced drilling operations using NMPC},'' \emph{Journal of Process
  Control}, vol.~68, pp. 73--85, 2018.

\bibitem{Pedersen2017}
T.~Pedersen and J.-m. Godhavn, ``{Linear Multivariable Control of
  Underbalanced-Drilling Operations},'' \emph{SPE Drilling {\&} Completion},
  vol.~32, no.~04, pp. 1--11, 2017.

\bibitem{Pedersen2015}
T.~Pedersen, J.-M. Godhavn, and J.~Schubert, ``{Supervisory control for
  underbalanced drilling operations},'' \emph{IFAC-PapersOnLine}, vol.~48,
  no.~6, pp. 120--127, 2015.

\bibitem{li2008observability}
T.~Li, ``Exact boundary observability for quasilinear hyperbolic systems,''
  \emph{ESAIM: Control, Optimisation and Calculus of Variations}, vol.~14,
  no.~4, pp. 759--766, 2008.

\bibitem{strecker2019quasilinearfirstorder}
T.~Strecker, O.~M. Aamo, and M.~Cantoni, ``Direct predictive boundary control
  of a first-order quasilinear hyperbolic {PDE},'' in \emph{2019 IEEE 58th
  Annual Conference on Decision and Control (CDC)}.\hskip 1em plus 0.5em minus
  0.4em\relax IEEE, 2019.

\bibitem{strecker2021quasilinear2x2}
------, ``Boundary feedback control of 2x2 quasilinear hyperbolic systems:
  Predictive synthesis and robustness analysis,'' \emph{IEEE Transactions on
  Automatic Control}, 2021.

\bibitem{li2003exact}
T.-T. Li and B.-P. Rao, ``Exact boundary controllability for quasi-linear
  hyperbolic systems,'' \emph{SIAM Journal on Control and Optimization},
  vol.~41, no.~6, pp. 1748--1755, 2003.

\bibitem{gugat2011flow}
M.~Gugat, M.~Herty, and V.~Schleper, ``Flow control in gas networks: exact
  controllability to a given demand,'' \emph{Mathematical Methods in the
  Applied Sciences}, vol.~34, no.~7, pp. 745--757, 2011.

\bibitem{krstic2008backstepping}
M.~Krstic and A.~Smyshlyaev, ``Backstepping boundary control for first-order
  hyperbolic {PDEs} and application to systems with actuator and sensor
  delays,'' \emph{Systems \& Control Letters}, vol.~57, no.~9, pp. 750--758,
  2008.

\bibitem{vazquez2011backstepping}
R.~Vazquez, M.~Krstic, and J.-M. Coron, ``Backstepping boundary stabilization
  and state estimation of a 2$\times$ 2 linear hyperbolic system,'' in
  \emph{2011 50th IEEE Conference on Decision and Control and European Control
  Conference (CDC-ECC)}, 2011, pp. 4937--4942.

\bibitem{aamo2013disturbance}
O.~M. Aamo, ``Disturbance rejection in $2\times2$ linear hyperbolic systems,''
  \emph{IEEE Transactions on Automatic Control}, vol.~58, no.~5, pp.
  1095--1106, 2013.

\bibitem{Aarsnes2014a}
U.~J.~F. Aarsnes, F.~{Di Meglio}, S.~Evje, and O.~M. Aamo, ``{Control-oriented
  drift-flux modeling of single and two-phase flow for drilling},'' in
  \emph{ASME 2014 Dynamic Systems and Control Conference}, vol.~3.\hskip 1em
  plus 0.5em minus 0.4em\relax San Antonio, Texas, USA: ASME, oct 2014, p.
  V003T37A003.

\bibitem{Evje2002}
S.~Evje and K.~K. Fjelde, ``{Hybrid Flux-Splitting Schemes for a Two-Phase Flow
  Model},'' \emph{Journal of Computational Physics}, vol. 175, no.~2, pp.
  674--701, jan 2002.

\bibitem{Aarsnes2016}
U.~J.~F. Aarsnes, T.~Fl{\aa}tten, and O.~M. Aamo, ``{Review of two-phase flow
  models for control and estimation},'' \emph{Annual Reviews in Control},
  vol.~42, pp. 50--62, jul 2016.

\bibitem{Udegbunam2015}
J.~E. Udegbunam, K.~K. Fjelde, S.~Evje, and G.~Nygaard, ``{On the
  Advection-Upstream-Splitting-Method Hybrid Scheme: A Simple Transient-Flow
  Model for Managed-Pressure-Drilling and Underbalanced-Drilling
  Applications},'' \emph{SPE Drilling {\&} Completion}, vol.~30, no.~02, pp.
  098--109, jun 2015.

\bibitem{gavrilyuk1996lagrangian}
S.~Gavrilyuk and J.~Fabre, ``Lagrangian coordinates for a drift-flux model of a
  gas-liquid mixture,'' \emph{International journal of multiphase flow},
  vol.~22, no.~3, pp. 453--460, 1996.

\bibitem{aarsnes2016simplified}
U.~J.~F. Aarsnes, A.~Ambrus, F.~Di~Meglio, A.~K. Vajargah, O.~M. Aamo, and
  E.~van Oort, ``A simplified two-phase flow model using a quasi-equilibrium
  momentum balance,'' \emph{International Journal of Multiphase Flow}, vol.~83,
  pp. 77--85, 2016.

\bibitem{strecker2017output}
T.~Strecker and O.~M. Aamo, ``Output feedback boundary control of $2\times2$
  semilinear hyperbolic systems,'' \emph{Automatica}, vol.~83, pp. 290--302,
  2017.

\bibitem{li2016nodal}
T.~Li, K.~Wang, and Q.~Gu, \emph{Exact Boundary Controllability of Nodal
  Profile for Quasilinear Hyperbolic Systems}.\hskip 1em plus 0.5em minus
  0.4em\relax Springer, 2016.

\bibitem{bressan2000hyperbolic}
A.~Bressan, \emph{Hyperbolic systems of conservation laws: the one-dimensional
  Cauchy problem}.\hskip 1em plus 0.5em minus 0.4em\relax Oxford University
  Press, 2000, vol.~20.

\bibitem{Boyun2002}
B.~Guo and A.~Ghalambor, ``{An Innovation in Designing Underbalanced Drilling
  Flow Rates: A Gas-Liquid Rate Window (GLRW) Approach},'' in \emph{Proceedings
  of IADC/SPE Asia Pacific Drilling Technology}.\hskip 1em plus 0.5em minus
  0.4em\relax Society of Petroleum Engineers, sep 2002.

\bibitem{heemels2012introduction}
W.~P. Heemels, K.~H. Johansson, and P.~Tabuada, ``An introduction to
  event-triggered and self-triggered control,'' in \emph{2012 ieee 51st ieee
  conference on decision and control (cdc)}.\hskip 1em plus 0.5em minus
  0.4em\relax IEEE, 2012, pp. 3270--3285.

\bibitem{espitia2020observer}
N.~Espitia, ``Observer-based event-triggered boundary control of a linear
  2$\times$ 2 hyperbolic systems,'' \emph{Systems \& Control Letters}, vol.
  138, p. 104668, 2020.

\bibitem{hespanha1999dwelltime}
J.~P. Hespanha and A.~S. Morse, ``Stability of switched systems with average
  dwell-time,'' in \emph{Proceedings of the 38th IEEE conference on decision
  and control (Cat. No. 99CH36304)}, vol.~3.\hskip 1em plus 0.5em minus
  0.4em\relax IEEE, 1999, pp. 2655--2660.

\end{thebibliography}

\end{document}